\documentclass[11pt]{article}
\usepackage[a4paper,margin=1.12in]{geometry}
\usepackage{amsmath,amsthm,amssymb}
\usepackage{microtype}
\usepackage{cite}
\usepackage{hyperref}
\usepackage[nameinlink,capitalize,noabbrev]{cleveref}
\usepackage{indentfirst}

\hypersetup{
  colorlinks=true,
  linkcolor=black,
  citecolor=black,
  urlcolor=black
}

\newcommand{\CC}{\mathbb C}
\newcommand{\PP}{\mathbb P}
\DeclareMathOperator{\Gr}{Gr}

\newtheorem{theorem}{Theorem}[section]
\newtheorem{lemma}[theorem]{Lemma}
\newtheorem{proposition}[theorem]{Proposition}
\newtheorem{corollary}[theorem]{Corollary}
\theoremstyle{remark}

\title{The Aron--Rueda zero-subspace problem}
\author{Nacib Albuquerque\thanks{Department of Mathematics, Universidade Federal da Para\'iba, 58051-900 Jo\~ao Pessoa, Brazil. E-mail: \texttt{ngalbuquerque@mat.ufpb.br}}
\and
Daniel M. Pellegrino\thanks{Department of Mathematics, Universidade Federal da Para\'iba, 58051-900 Jo\~ao Pessoa, Brazil. E-mail: \texttt{daniel.pellegrino@academico.ufpb.br}. Corresponding author.}
\and
Anselmo Raposo Jr.\thanks{Coordination of the Bachelor's Degree in Mathematics, Universidade Federal do Maranh\~ao, 65085-580 S\~ao Lu\'is, Brazil. E-mail: \texttt{anselmo.junior@ufma.br}.}}
\date{}

\begin{document}
\maketitle

\begin{abstract}
We determine the exact finite-dimensional threshold in the zero-subspace problem of Aron and Rueda for
complex homogeneous polynomials.  More precisely, for every $d$ and $k$ we
determine the least $m$ such that every $d$-homogeneous polynomial on
$\CC^m$ vanishes on a $k$-dimensional linear subspace.  We also determine
the exact threshold for arbitrary polynomials of degree at most $d$ to be
constant on a $k$-dimensional linear subspace.  The two thresholds are
different. In the homogeneous case the exact threshold follows from Tevelev's
theorem on isotropic subspaces and closedness of the incidence locus. In the
bounded-degree case we first eliminate the linear homogeneous component by
passing to its kernel; the remaining components, of degrees $2,\ldots,d$,
form the system to which the Debarre--Manivel theorem is applied. For $k=2$
we give a separate proof using top Chern classes and Newton's inequalities.
\end{abstract}

\medskip
\noindent\textbf{Keywords:} homogeneous polynomial; zero subspace;
Grassmannian; isotropic subspace; symmetric form; Chern class.

\smallskip
\noindent\textbf{2020 Mathematics Subject Classification:} Primary 46G25;
Secondary 14M10, 14M15, 14N05.

\section{The problem and the answer}

Aron and Rueda asked for quantitative information on linear subspaces
contained in the zero set of a complex homogeneous polynomial
\cite{AronRueda1997}.  The problem was later considered from several related
points of view; see
\cite{AronGonzaloZagorodnyuk2000,AronHajek2006,AvilesTodorcevic2009,
AronSeoane2025,AiresBotelho2026}.  For the wider lineability background we
refer to \cite{AronGurariySeoane2005,BernalPellegrinoSeoane2014,LineabilityBook}.

The question we consider is the following.  Let
$P:\CC^m\to\CC$ be a $d$-homogeneous polynomial.  How large must $m$ be in
order to guarantee, for every such $P$, the existence of a $k$-dimensional
linear subspace on which $P$ vanishes identically?  We denote the least such
integer by $h(d,k)$.  Aron and Rueda proved that such a dimension exists and
gave quantitative bounds.  Louren\c{c}o and Tocha later obtained
substantially sharper estimates \cite{LourencoTocha2007}.  These arguments
are constructive and proceed by producing zero directions successively.
They do not, however, identify the optimal threshold.  To the best of our
knowledge, neither the exact value obtained below nor its connection with
Tevelev's isotropic-subspace theorem has been recorded in the zero-subspace
literature.

Let $\Gr(k,m)$ denote the Grassmannian of $k$-dimensional linear subspaces of
$\CC^m$. A subspace $E$ is therefore a point of $\Gr(k,m)$, while the
condition $P|_E=0$ imposes $\binom{d+k-1}{k-1}$ independent conditions. The
same parameter count appears in Tevelev's theorem for isotropic subspaces of
symmetric forms. Closedness of the incidence locus then passes from a general
form to every form.

For a nonzero complex vector space $E$, we use $\mathbb P(E)=(E\setminus\{0\})/\mathbb C^\times$; thus $\dim E=k$ corresponds to $\mathbb P(E)\simeq\mathbb P^{k-1}$.

Concretely, a $k$-dimensional vector subspace $E\subset\CC^m$ determines a
projective $(k-1)$-plane
$\PP(E)\subset\PP^{m-1}$. For a homogeneous polynomial $P$, write
\[
        V(P)=\{[x]\in\PP^{m-1}:P(x)=0\}.
\]
Then
\[
        P|_E=0
        \quad\Longleftrightarrow\quad
        \PP(E)\subset V(P).
\]
Thus the problem is equivalent to asking when every degree-$d$ hypersurface
in $\PP^{m-1}$ contains a projective $(k-1)$-plane.

The homogeneous threshold is the following.

\begin{theorem}[Homogeneous case]\label{thm:hom-main}
For all positive integers $d$ and $k$,
\[
h(d,k)=
\begin{cases}
k+1, & d=1,\\[2mm]
2k, & d=2,\\[2mm]
\displaystyle
k+\left\lceil
\frac{1}{k}\binom{d+k-1}{k-1}
\right\rceil, & d\ge3.
\end{cases}
\]
\end{theorem}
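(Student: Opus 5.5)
We handle the three ranges of $d$ separately. In each case we prove the upper bound (every $P$ on $\CC^m$ vanishes on some $k$-plane once $m$ reaches the stated value) and the lower bound (some $P$ on $\CC^{m-1}$ vanishes on no $k$-plane).

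For $d=1$, the zero set of a nonzero linear form on $\CC^m$ is a hyperplane of dimension $m-1$. It contains a $k$-plane iff $m\ge k+1$. The nondegenerate form $x_1$ on $\CC^k$ shows that $m=k$ does not suffice.

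For $d=2$, write $P(x)=B(x,x)$ with $B$ symmetric of rank $r\le m$. A subspace on which $P$ vanishes is totally isotropic for $B$. The maximal dimension of such a subspace is $(m-r)+\lfloor r/2\rfloor$, obtained by adding the radical to a maximal isotropic subspace of the nondegenerate part. Over $\CC$ this maximum is minimized at $r=m$, where it equals $\lfloor m/2\rfloor$. Hence every quadric on $\CC^m$ has a zero $k$-plane iff $\lfloor m/2\rfloor\ge k$, i.e.\ $m\ge 2k$. The standard form $\sum x_i^2$ on $\CC^{2k-1}$ is the obstruction.

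For $d\ge3$, set $N=\binom{d+k-1}{k-1}$. Consider the incidence variety
\[
I=\{(P,E):P|_E=0\}\subset \mathrm{Sym}^d(\CC^m)^*\times\Gr(k,m).
\]
The map $P\mapsto P|_E$ corresponds to a surjective bundle map onto $\mathrm{Sym}^d$ of the dual tautological bundle, whose rank is $N$. So $I$ is a vector bundle over $\Gr(k,m)$, hence irreducible, of dimension $\dim \mathrm{Sym}^d+k(m-k)-N$. Since $\Gr(k,m)$ is projective, the projection $\pi:I\to \mathrm{Sym}^d(\CC^m)^*$ is proper, and its image is closed. Therefore $\pi$ is surjective iff a \emph{general} $P$ vanishes on some $k$-plane. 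This happens iff $\pi$ is dominant, which in turn happens iff the general fibre has dimension $k(m-k)-N\ge0$. This last step is exactly what Tevelev's theorem supplies: for $d\ge3$ and $P$ general, the variety of $k$-planes in $V(P)$ is nonempty precisely when $k(m-k)\ge N$, in which case it has the expected dimension, and it is empty otherwise. The exceptional cases in that theorem are quadrics, handled above, and $d=1$. Solving $k(m-k)\ge N$ gives $m\ge k+\lceil N/k\rceil$.

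For the lower bound at $m-1$, we have $k(m-1-k)<N$. Then $\dim I<\dim \mathrm{Sym}^d$, so $\pi$ cannot be surjective, and some $P$ has no zero $k$-plane. This direction needs only the dimension count and not Tevelev's theorem. The upper bound is the real content, which consists of invoking Tevelev's nonemptiness for general forms and combining it with properness. The main point to check carefully is that the hypotheses of Tevelev's theorem cover all $d\ge3$ and all $k\ge1$, including the small cases $k=1$ (where the count $m\ge 1+N=2$ is immediate) and $m\le 2k$ (where the Grassmannian duality range must be allowed).
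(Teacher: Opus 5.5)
Your proposal is correct and follows essentially the same route as the paper: the kernel argument for $d=1$, the radical-plus-Witt bound for $d=2$, and for $d\ge3$ Tevelev's generic existence promoted to every form by closedness of the image of the incidence correspondence under the proper projection, with sharpness from the dimension count $\dim I<\dim$ of the parameter space. The differences are cosmetic: you work with the affine space of forms and view $I$ as a vector bundle over $\Gr(k,m)$ rather than projectivizing the forms, and you absorb $k=1$ into the general case, where it is indeed the elementary fact that binary forms have a nontrivial zero, whereas the paper proves it in a separate lemma.
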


The size of the improvement is easy to see for cubic polynomials.
Louren\c{c}o and Tocha proved
\[
        h(3,k)\le
        2^k+\frac{k^2+k}{2}-1.
\]
They record the bounds
\[
        13,\ 25,\ 46
\]
for $k=3,4,5$, respectively; the earlier Aron--Rueda bounds were
$16,40,96$. The exact values given by \cref{thm:hom-main} are
\[
        h(3,3)=7,\qquad h(3,4)=9,\qquad h(3,5)=12.
\]
In fact,
\[
        h(3,k)=
        k+\left\lceil\frac{(k+1)(k+2)}6\right\rceil.
\]

We also consider the least integer $\mu(d,k)$ such that every polynomial

\[
        P:\CC^m\to\CC,
        \qquad \deg P\le d,
\]
is constant on some $k$-dimensional linear subspace. Here and throughout,
all subspaces are linear rather than affine; consequently, the constant value
of $P|_E$ is necessarily $P(0)$.

This is not the same problem. If
\[
        P=P_0+P_1+\cdots+P_d
\]
is the homogeneous decomposition of $P$, then
\[
        P|_E\text{ is constant}
        \quad\Longleftrightarrow\quad
        P_j|_E=0
        \quad (j=1,\ldots,d).
\]
Thus the homogeneous problem asks for a zero subspace of a single form,
whereas the bounded-degree problem asks for a common zero subspace of all
nonconstant homogeneous components simultaneously.

The exact answer is again explicit.

\begin{theorem}[Polynomials of degree at most $d$]\label{thm:general-main}
For all positive integers $d$ and $k$,
\[
\mu(d,k)=
\begin{cases}
k+1, & d=1,\\[2mm]
2k+1, & d=2,\\[2mm]
\displaystyle
k+\left\lceil
\frac{\binom{d+k}{k}-1}{k}
\right\rceil, & d\ge3.
\end{cases}
\]
\end{theorem}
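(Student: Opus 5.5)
The strategy is to reduce \cref{thm:general-main} to a statement about common zero subspaces of the forms $P_2,\ldots,P_d$ on a hyperplane. We eliminate the linear part $P_1$ and then apply the Debarre--Manivel theorem on the variety of $(k-1)$-planes in complete intersections, as announced in the abstract.

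\emph{Reduction.} Write $P=P_0+P_1+\cdots+P_d$. If $P_1\neq0$, every $E$ with $P_1|_E=0$ lies in $H=\ker P_1\cong\CC^{m-1}$. If $P_1=0$, choose any hyperplane $H$; this only strengthens the condition. Hence $P$ is constant on some $E\in\Gr(k,m)$ whenever the forms $P_2|_H,\ldots,P_d|_H$ have a common $k$-dimensional zero subspace in $H$.

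Conversely, take $P_1$ a nonzero linear form and $P_2,\ldots,P_d$ general. Any $E$ on which $P$ is constant lies in $H$ and is a common zero subspace of the general forms $P_j|_H$. Thus $\mu(d,k)=1+N$, where $N$ is the least $n$ such that every tuple of forms of degrees $2,\ldots,d$ on $\CC^n$ has a common $k$-dimensional zero subspace.

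\emph{Upper bound for $N$.} Consider the incidence variety
\[
I=\{(E,F)\in\Gr(k,n)\times\textstyle\prod_{j=2}^d S^j(\CC^n)^*:F_j|_E=0\ \forall j\}.
\]
Its projection to the product of form spaces is proper, because the Grassmannian is projective, so the image is closed. It therefore suffices to show that a general tuple has a common zero subspace. This is where Debarre--Manivel applies: for a general complete intersection of multidegree $(2,\ldots,d)$, the Fano scheme of $(k-1)$-planes is nonempty of the expected dimension
\[
\delta=k(n-k)-\sum_{j=2}^d\binom{j+k-1}{k-1}=k(n-k)-\Big(\binom{d+k}{k}-1-k\Big)
\]
whenever $\delta\ge0$. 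The exceptions are cases such as a single quadric, where the isotropic bound $n\ge2k$ is the correct one instead of the count. The identity uses $\sum_{j=0}^d\binom{j+k-1}{k-1}=\binom{d+k}{k}$.

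Solving $\delta\ge0$ gives $n\ge k+\lceil(\binom{d+k}{k}-1-k)/k\rceil$, so $\mu=n+1=k+\lceil(\binom{d+k}{k}-1)/k\rceil$, which is the formula for $d\ge3$.

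For $d=2$ only the quadric $P_2|_H$ remains. Its common zero subspace exists iff $n\ge2k$, using that a nondegenerate quadric in $\CC^n$ has maximal isotropic subspaces of dimension $\lfloor n/2\rfloor$. This gives $\mu=2k+1$. For $d=1$ we get $\mu=k+1$ directly.

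\emph{Lower bound for $N$.} If $\delta<0$, a dimension count on $I$ shows the projection is not dominant. Indeed, the fibres of $I\to\Gr(k,n)$ are linear of codimension $\sum_j\binom{j+k-1}{k-1}$, so
\[
\dim I=\dim\textstyle\prod_j S^j+\delta<\dim\prod_j S^j.
\]
Hence a general tuple has no common zero subspace. Lifting these forms back to $\CC^m$ together with a nonzero $P_1$ gives the counterexamples. For $d=2$ we use instead a nondegenerate quadric on $\CC^{2k-1}$.

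\emph{Main obstacle.} The delicate step is invoking Debarre--Manivel with exactly the right hypotheses. We must check that for multidegree $(2,3,\ldots,d)$ with $d\ge3$ the nonemptiness criterion is precisely $\delta\ge0$, and that the quadric factor does not trigger one of their exceptional cases. Their theorem excludes situations dominated by quadrics, such as $\sum$ of degrees small relative to $k$. I would first verify that in our range the expected-dimension condition coincides with nonemptiness. If it does not, I would treat the boundary cases separately, perhaps via the top Chern class $c_{\mathrm{top}}(\bigoplus_j S^j\mathcal U^*)\neq0$ on $\Gr(k,n)$ when $\delta=0$.
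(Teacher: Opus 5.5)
\textbf{Verdict: genuine gap.} Your route is the paper's route. You pass to $H=\ker P_1$ and apply Debarre--Manivel to the multidegree $(2,\ldots,d)$ on $H$. You promote generic existence to universal existence by closedness of the incidence image, get sharpness from the incidence dimension count, and use Witt theory for $d=2$. The gap is the step you flag as the ``main obstacle'' and then leave open.

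Theorem~2.1(b) of Debarre--Manivel does not give nonemptiness whenever $\delta\ge0$. Its hypothesis is $\delta_-=\min\{\delta,\,N-2r-s\}\ge0$. With $N=n-1$, $r=k-1$, $s=d-1$, this adds the range condition $n\ge 2k+d-2$. That condition is not implied by $\delta\ge0$ in general. For a single quadric in $\PP^4$ and $r=2$ one has $\delta=3\cdot 2-6=0$, yet a smooth quadric threefold contains no planes. Your system contains a quadric component, so this is exactly the situation you cannot wave away. As written, your upper bound for $d\ge3$ rests on an application of the theorem whose hypotheses you have not checked, and whose statement you paraphrase incorrectly. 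Your fallback via a nonvanishing top Chern class on $\Gr(k,n)$ is only a suggestion. For general $k$ that nonvanishing is precisely the hard content; the paper carries it out only for $k=2$.

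What is missing is a short numerical estimate, which the paper isolates as \cref{lem:bounded-range}. For $d\ge3$ and $k\ge2$,
\[
\binom{d+k}{k}-1>k(k+d-1).
\]
At $d=3$ the difference equals $k(k-1)(k+1)/6>0$. Each time $d$ increases by one, the difference grows by $\binom{d+k}{k-1}-k>0$. Dividing by $k$ gives $m_0\ge 2k+d$. So at your threshold $n_0=m_0-1$ one has $N-2r-s=n_0-2k-d+2\ge1$. Larger $n$ are handled by restricting the forms to an $n_0$-dimensional subspace. For $k=1$ the same check gives $N-2r-s=0$; alternatively argue directly via Krull's height theorem, as the paper does. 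With this inequality added, your argument is complete.
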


In particular,
\[
        h(2,k)=2k,
        \qquad
        \mu(2,k)=2k+1.
\]
So even in degree two the homogeneous and nonhomogeneous thresholds are
different.  The quadratic case is exceptional and is governed by the Witt theory of
quadratic forms, so we treat it separately.

The homogeneous problem is linked to isotropic subspaces of symmetric forms,
whereas the bounded-degree problem becomes a simultaneous system of degrees
$(1,2,\ldots,d)$. The relevant geometric facts are recalled next.

\section{The geometric reduction}
\label{sec:ag-guide}

For a fixed $k$-dimensional subspace $E$, vanishing gives linear conditions
on the coefficients, while $E$ ranges over the Grassmannian of dimension
$k(m-k)$.  This gives the candidate threshold and the sharp converse.
Existence at the threshold comes from Tevelev in the homogeneous case and
from Debarre--Manivel in the bounded-degree case.

\subsection{Projectivization}

Let $E\subset\CC^m$ be a $k$-dimensional linear subspace. As defined above,
$\PP(E)$ is a projective space of dimension $k-1$, naturally embedded as a
projective $(k-1)$-plane in $\PP^{m-1}$.  Conversely, every projective
$(k-1)$-plane in $\PP^{m-1}$ arises uniquely in this way from a
$k$-dimensional vector subspace of $\CC^m$.

If
\[
        G:\CC^m\longrightarrow\CC
\]
is homogeneous of positive degree, then
\begin{equation}\label{eq:proj-dictionary}
        G|_E=0
        \quad\Longleftrightarrow\quad
        \PP(E)\subset V(G)\subset\PP^{m-1}.
\end{equation}
Indeed, homogeneity makes the zero set invariant under multiplication by
nonzero scalars.  Thus vanishing on all of $E$ is equivalent to vanishing on
the corresponding projective points.

For a system of homogeneous forms $G_1,\ldots,G_s$, the same argument gives
\[
        G_i|_E=0\quad (i=1,\ldots,s)
\]
if and only if
\[
        \PP(E)\subset V(G_1,\ldots,G_s).
\]
No smoothness or complete-intersection hypothesis is involved.

\subsection{The Grassmannian}

The Grassmannian $\Gr(k,m)$ is a projective algebraic variety.
For the present paper, the only numerical fact about it that we need is
\begin{equation}\label{eq:grass-dim}
        \dim\Gr(k,m)=k(m-k).
\end{equation}

Near a fixed decomposition
\[
        \CC^m=E\oplus F,
        \qquad \dim E=k,\quad \dim F=m-k,
\]
every subspace sufficiently close to $E$ is the graph of a linear map
$E\to F$.  Such maps form a vector space of dimension $k(m-k)$, which is the
local dimension of the Grassmannian.

Thus \eqref{eq:grass-dim} is the number of parameters available for the
choice of the zero subspace.

\subsection{The number of conditions}

Fix a $k$-dimensional subspace $E\subset\CC^m$ and a positive integer $e$.
A homogeneous polynomial of degree $e$ vanishes identically on $E$ exactly
when its restriction to $E$ is the zero polynomial.  The vector space of
degree-$e$ homogeneous polynomials on $E\simeq\CC^k$ has dimension
\begin{equation}\label{eq:number-monomials}
        \binom{e+k-1}{k-1}.
\end{equation}
It is the number of monomials
\[
        z_1^{\alpha_1}\cdots z_k^{\alpha_k},
        \qquad
        \alpha_1+\cdots+\alpha_k=e.
\]

Restriction from $\CC^m$ to $E$ is surjective.  Indeed, after choosing
coordinates with
\[
        E=\{z_{k+1}=\cdots=z_m=0\},
\]
a homogeneous polynomial in $z_1,\ldots,z_k$ is the restriction of the same
polynomial viewed as a polynomial in all $m$ variables.  Therefore the
condition
\[
        G|_E=0
\]
imposes exactly
\[
        \binom{e+k-1}{k-1}
\]
independent linear conditions on a degree-$e$ form.

For forms of degrees $e_1,\ldots,e_s$, the total number of conditions at a
fixed $E$ is consequently
\begin{equation}\label{eq:cdef-guide}
        c(\mathbf e,k)
        :=
        \sum_{i=1}^s\binom{e_i+k-1}{k-1}.
\end{equation}

Comparing \eqref{eq:grass-dim} and \eqref{eq:cdef-guide} already suggests
the answer.  There are $k(m-k)$ parameters with which to choose $E$, and
there are $c(\mathbf e,k)$ conditions to satisfy.  Hence the critical sign is
\begin{equation}\label{eq:expected-vector}
        k(m-k)-c(\mathbf e,k).
\end{equation}
For a single form the same count is exactly the one occurring in Tevelev's
isotropic-subspace theorem.  For systems, we shall use the projective
nonemptiness theorem recalled next.

\subsection{The Debarre--Manivel theorem}

We shall use a small amount of standard algebraic-geometric language.  The space
\[
        H^0(\PP^N,\mathcal O(e))
\]
is simply the vector space of homogeneous polynomials of degree $e$ in $N+1$
variables.  A condition on the coefficients is called \emph{Zariski closed} if it
is given by polynomial equations in those coefficients; a Zariski-open set is the
complement of a Zariski-closed set.  We say that a property holds for a
\emph{general} form, or a general system of forms, when it holds on some nonempty
Zariski-open subset of the corresponding parameter space.  The projective spaces
used below are irreducible, so every nonempty Zariski-open subset is dense.

For the bounded-degree problem we use the following result of
Debarre--Manivel.

Let $\mathbf e=(e_1,\ldots,e_s)$ be a finite sequence of positive integers, and let
\[
        X=V(G_1,\ldots,G_s)\subset\PP^N,
        \qquad \deg G_i=e_i.
\]
We write $F_r(X)$ for the parameter space of projective $r$-planes contained in $X$.  The Grassmannian of all
projective $r$-planes in $\PP^N$ has dimension
\[
        (r+1)(N-r),
\]
and the restriction to a fixed $r$-plane of a degree-$e_i$ equation has
\[
        \binom{e_i+r}{r}
\]
coefficients.  The difference between these two numbers is the expected dimension. Thus
\begin{equation}\label{eq:delta}
\delta(N,\mathbf e,r)
=
(r+1)(N-r)
-
\sum_{i=1}^s\binom{e_i+r}{r}.
\end{equation}

Debarre and Manivel introduce the auxiliary integer
\begin{equation}\label{eq:delta-minus}
\delta_-(N,\mathbf e,r)
=
\min\bigl\{\delta(N,\mathbf e,r),\,N-2r-s\bigr\}.
\end{equation}
The precise statement we need is contained in
\cite[Theorem~2.1(b)]{DebarreManivel1998}.

\begin{theorem}[Debarre--Manivel, Theorem~2.1(b), in our notation]
\label{thm:dm-fano}
Let $e_1,\ldots,e_s\ge2$, and let $X\subset\PP^N$ be defined by a general
system of homogeneous equations of multidegree
$\mathbf e=(e_1,\ldots,e_s)$. If
\[
        \delta_-(N,\mathbf e,r)\ge0,
\]
then $F_r(X)$ is nonempty and smooth of dimension $\delta(N,\mathbf e,r)$.
\end{theorem}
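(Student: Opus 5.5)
The plan is the standard incidence-correspondence argument, with the work concentrated in a surjectivity statement for one explicit system. Let $S=\bigoplus_{i=1}^s H^0(\PP^N,\mathcal O(e_i))$ be the parameter space of systems, $\mathbb G$ the Grassmannian of projective $r$-planes in $\PP^N$, and
\[
I=\{(G,\Lambda)\in S\times\mathbb G:\ G_i|_\Lambda=0\ \ (i=1,\ldots,s)\}.
\]
By the surjectivity of restriction recorded above, the fibre of $I\to\mathbb G$ over each $\Lambda$ is a linear subspace of $S$ of codimension $\sum_i\binom{e_i+r}{r}$. So $I$ is a vector bundle over $\mathbb G$; in particular it is smooth and irreducible of dimension $\dim S+\delta(N,\mathbf e,r)$. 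The fibre of the second projection $\pi\colon I\to S$ over $G$ is exactly $F_r(X)$ with $X=V(G)$.

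The theorem therefore reduces to showing that $\pi$ is dominant when $\delta_-\ge0$. Granting dominance, generic smoothness in characteristic zero shows that the fibre over a general $G$ is smooth of dimension $\dim I-\dim S=\delta$. Such a fibre is also nonempty, because the image of $\pi$ is closed ($\mathbb G$ is projective) and dense, hence all of $S$. It suffices to find a single point $(G,\Lambda)\in I$ at which $d\pi$ is surjective, since then $\pi$ is a submersion near that point and hence dominant.

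Next I would compute the cokernel of $d\pi$ at $(G,\Lambda)$. A standard tangent-space computation identifies it with the cokernel of
\[
\phi_G\colon H^0(\Lambda,N_{\Lambda/\PP^N})=H^0(\Lambda,\mathcal O(1))^{N-r}\longrightarrow\bigoplus_{i=1}^s H^0(\Lambda,\mathcal O(e_i)).
\]
Concretely, take $\Lambda=\{x_{r+1}=\cdots=x_N=0\}$ and write $G_i\equiv\sum_{j>r}x_j\,g_{ij}(x_0,\ldots,x_r)$ modulo $(x_{r+1},\ldots,x_N)^2$, with $g_{ij}$ forms of degree $e_i-1$ on $\Lambda\simeq\PP^r$. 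Then
\[
\phi_G(\ell_{r+1},\ldots,\ell_N)=\Bigl(\sum_j \ell_j\,g_{ij}\Bigr)_{i=1}^s .
\]
So the whole problem becomes: choose forms $g_{ij}$ of degree $e_i-1$ in $r+1$ variables such that this multiplication map $(\mathbb C^{r+1})^{N-r}\to\bigoplus_i H^0(\mathcal O_{\PP^r}(e_i))$ is onto. The numerical condition $\delta\ge0$ is exactly $\dim(\text{source})\ge\dim(\text{target})$.

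The main obstacle is this surjectivity, and this is where the second condition $N-2r-s\ge0$ should enter. It says that at least $r+s$ of the $N-r$ normal directions are available beyond the first $r$. My first attempt would be a block construction. Assign to each equation $G_i$ its own group of normal variables. For $G_i$, use $r+1$ of them with $g_{ij}=x_{j}^{e_i-1}$ (the powers of the coordinates on $\Lambda$), so that $\sum_j\ell_jg_{ij}$ already fills the part of $H^0(\mathcal O(e_i))$ spanned by $x_a^{e_i-1}x_b$. Then spend the remaining variables, distributed according to $\delta\ge0$, on general forms $g_{ij}$ to cover the remaining monomials. I would verify surjectivity by an explicit monomial-by-monomial (or upper-triangular) choice.

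Because the budget is tight, I expect to need an induction on $r$ or on $s$, restricting to a hyperplane of $\Lambda$ and using the extra $r+s$ directions to absorb the kernel at each step. This is the step where I would follow Debarre--Manivel's explicit choice rather than a dimension count. I would also check $e_i\ge2$ at this point: it guarantees that the $g_{ij}$ have positive degree, so that the construction is not degenerate.
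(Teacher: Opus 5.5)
\textbf{There is a genuine gap: the surjectivity step that carries the whole theorem is not proved, and the construction you sketch for it fails.}

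The paper does not prove this theorem; it quotes it from Debarre--Manivel, so your outline has to carry the whole argument. Your reduction is correct. $I$ is a vector bundle over the Grassmannian of $r$-planes, and generic smoothness together with properness of $\pi$ reduce everything to one point where $d\pi$ is onto. Also $\operatorname{coker} d\pi\cong\operatorname{coker}\phi_G$. But surjectivity of $\phi_g\colon(\ell_j)\mapsto(\sum_j\ell_j g_{ij})_i$ for some choice of the $g_{ij}$ is the entire content of the theorem, and you do not prove it.

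The block construction cannot work. Suppose each normal direction serves only one equation. Then $\phi_g$ splits into maps $(\CC^{r+1})^{n_i}\to H^0(\mathcal O_{\PP^r}(e_i))$ with $\sum_i n_i=N-r$. For a quadric, the image of such a map is $W\cdot H^0(\mathcal O_{\PP^r}(1))$, where $W$ is the span of the $g_{ij}$. So it is onto only if $n_i\ge r+1$.

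Take $N=6$, $r=2$, $\mathbf e=(2,2)$. Then $\delta=0=N-2r-s$, so the theorem applies; a general intersection of two quadrics in $\PP^6$ contains $64$ planes. Yet there are only $4$ normal directions, not $6$, so $\phi_g\colon\CC^{12}\to\CC^{12}$ must be an isomorphism that genuinely mixes the two equations. Saying you would ``follow Debarre--Manivel's explicit choice'' defers to the result being proved. You also never show where $N-2r-s\ge0$ is used.

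The missing idea is a dimension count on the dual side rather than a construction. Put $V=\CC^{r+1}$ and $M=N-r$. The map $\phi_g$ fails to be onto iff some nonzero $u=(u_i)\in\bigoplus_i S^{e_i}V$ (the apolar dual of the target) kills its image. This means $c_u(g_{1j},\ldots,g_{sj})=0$ for every $j$, where $c_u\colon\bigoplus_i S^{e_i-1}V^*\to V$ is contraction with $u$.

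If $\operatorname{rank}c_u=\rho$, this imposes $M\rho$ independent linear conditions on $g$. The image of $c_u$ is the smallest $W$ with $u\in\bigoplus_i S^{e_i}W$. Hence such $u$ form a family of dimension at most $\rho(r+1-\rho)+\sum_i\binom{e_i+\rho-1}{\rho-1}$. So $\phi_g$ is onto for general $g$ provided
\[
f(\rho):=\rho(M-r-1+\rho)-\sum_{i=1}^s\binom{e_i+\rho-1}{\rho-1}\ge0\qquad(1\le\rho\le r+1).
\]
The two endpoints are exactly the two hypotheses:
\begin{itemize}
\item $f(r+1)=\delta(N,\mathbf e,r)$.
\item $f(1)=N-2r-s$. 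The rank-one stratum consists of $u_i=\lambda_i v^{e_i}$. Equivalently, the $s\times M$ matrix $(g_{ij}(v))=(\partial G_i/\partial x_j(v))$ drops rank at some $[v]\in\Lambda$, so $X$ is singular at a point of $\Lambda$. This is exactly where the second condition enters.
\end{itemize}
For intermediate $\rho$, the second difference of $f$ is $2-\sum_i\binom{\rho+e_i-1}{e_i-2}\le0$ unless $s=1$ and $e_1=2$. So $f$ is concave and hence nonnegative between its endpoints. In the single-quadric case, $f(\rho)=\rho(M-r-1)+\binom{\rho}{2}\ge0$ directly. With this lemma in place, the rest of your outline goes through.
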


In the bounded-degree application we check
\[
        \delta(N,\mathbf e,r)\ge0,
        \qquad
        N-2r-s\ge0
\]
separately.

Bastianelli--Ciliberto--Flamini--Supino
\cite{BastianelliEtAl2020} use the opposite sign convention
\[
 t=
 \sum_{i=1}^s\binom{e_i+r}{r}-(r+1)(N-r)
 =-\delta.
\]
We use the $\delta_-$ formulation above.

\subsection{Closed incidence}

Fix positive integers $e_1,\ldots,e_s$, $k$ and $m$ with $1\le k<m$, and set
\[
\mathcal P
=
\prod_{i=1}^s
\PP\!\left(H^0(\PP^{m-1},\mathcal O(e_i))\right).
\]
Here $H^0(\PP^{m-1},\mathcal O(e_i))$ is, concretely, the space of
degree-$e_i$ homogeneous polynomials on $\CC^m$.  Thus a point of $\mathcal P$
is a tuple of nonzero homogeneous forms of the prescribed degrees, where each
form is considered up to multiplication by a nonzero scalar.  This quotient is harmless because multiplying a form by a
nonzero scalar does not change its zero set.
For the sharpness argument it is convenient to projectivize the factors
separately.  This differs harmlessly from the projectivization of the direct
sum used in some treatments of the universal family: independent rescaling of
the equations does not change their common zero locus, while the product
notation makes the codimension count completely transparent.

Define
\[
\mathcal I=
\left\{
\bigl(([G_1],\ldots,[G_s]),E\bigr)
\in
\mathcal P\times\Gr(k,m):
G_i|_E=0\ \text{for every }i
\right\}.
\]
Thus $\mathcal I$ remembers simultaneously a system of forms and a common
$k$-dimensional zero subspace.  Let
\[
        \pi:\mathcal I\longrightarrow\mathcal P
\]
be the projection and write
\[
        \mathcal W:=\pi(\mathcal I).
\]
Then $\mathcal W$ is exactly the set of systems admitting at least one common
$k$-dimensional zero subspace.

\begin{lemma}[Closed-incidence principle]\label{lem:closure}
The incidence set $\mathcal I$ is Zariski closed in
$\mathcal P\times\Gr(k,m)$, and its image
$\mathcal W=\pi(\mathcal I)$ is Zariski closed in $\mathcal P$.
\end{lemma}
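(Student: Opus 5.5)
The plan has two steps: show that $\mathcal I$ is cut out by bihomogeneous equations on $\mathcal P\times\Gr(k,m)$, and then use properness of the projection. Nothing beyond standard facts is needed. Closedness is a local property, so I will check it on the standard affine charts of $\Gr(k,m)$.

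\emph{Closedness of $\mathcal I$.} Fix a $k$-subset $J\subset\{1,\ldots,m\}$ and take the chart $U_J\subset\Gr(k,m)$ of subspaces that are graphs over the coordinate subspace $\CC^J$. On $U_J$ a subspace $E$ is the column span of an $m\times k$ matrix $A$ whose $J$-rows form the identity and whose other entries are affine coordinates. For $t\in\CC^k$ expand
\[
G_i(At)=\sum_{|\alpha|=e_i} c_{i,\alpha}(G_i,A)\,t^\alpha .
\]
Each coefficient $c_{i,\alpha}$ is linear in the coefficients of $G_i$ and polynomial in the entries of $A$. The condition $G_i|_E=0$ holds exactly when all $c_{i,\alpha}$ vanish. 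These equations are homogeneous in the coefficients of each $G_i$ separately, so they are well defined on the projective factor $\PP(H^0(\PP^{m-1},\mathcal O(e_i)))$ and cut out a closed subset of $\mathcal P\times U_J$.

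The condition does not depend on the chosen basis of $E$: changing the basis replaces $A$ by $AM$ with $M\in GL_k$, which only substitutes $t\mapsto Mt$. Hence these closed subsets agree on overlaps of charts. Since the $U_J$ cover $\Gr(k,m)$, the set $\mathcal I$ is Zariski closed. If one prefers a global description, the same conclusion follows because the $c_{i,\alpha}$ are the components of the section of $\mathcal O_{\mathcal P}(1)\boxtimes\mathrm{Sym}^{e_i}S^\vee$ that restricts $G_i$ to the tautological subbundle $S$. The vanishing locus of a section of a vector bundle is closed.

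\emph{Closedness of $\mathcal W$.} This is the only step with real content. Here I will invoke the main theorem of elimination theory: $\Gr(k,m)$ is a projective variety, via the Plücker embedding, so the projection $\mathcal P\times\Gr(k,m)\to\mathcal P$ is a closed map. Equivalently, projective varieties are complete or universally closed. Since $\mathcal I$ is closed in the product, its image $\mathcal W=\pi(\mathcal I)$ is closed in $\mathcal P$.

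As an alternative that avoids citing properness, one can reduce to the classical statement that projections $\PP^n\times Y\to Y$ are closed. To do this, embed $\Gr(k,m)$ as a closed subvariety of $\PP^{\binom mk-1}$ and note that $\mathcal I$ is then closed in $\mathcal P\times\PP^{\binom mk-1}$.
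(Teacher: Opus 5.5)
Your proposal is correct and follows essentially the same route as the paper. You check closedness of $\mathcal I$ on affine charts of $\Gr(k,m)$ by substituting a polynomially parametrized basis of $E$ into each $G_i$ and requiring all resulting coefficients to vanish, and you get closedness of $\mathcal W$ from the projective closed-map (elimination) theorem for $\mathcal P\times\Gr(k,m)\to\mathcal P$; your remarks on basis independence and the global section description are harmless extras that the paper does not need.
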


\begin{proof}
We prove both assertions directly.

\smallskip
\noindent\emph{Step 1: the condition $G_i|_E=0$ is algebraic.}
On an affine coordinate chart of the Grassmannian, a $k$-dimensional
subspace $E$ can be represented as the span of $k$ vectors whose coordinates
depend polynomially on the chart parameters.  Write a general vector of
$E$ as
\[
        x_1v_1+\cdots+x_kv_k.
\]
After substitution into $G_i$, we obtain a homogeneous polynomial of degree
$e_i$ in the variables $x_1,\ldots,x_k$.  The restriction $G_i|_E$ is zero
exactly when every coefficient of this polynomial is zero.  Those
coefficients are polynomial expressions in the coefficients of $G_i$ and in
the coordinates describing $E$.  The same description holds on the standard
affine charts of the projective parameter factors, and these charts together
with the Grassmannian charts cover $\mathcal P\times\Gr(k,m)$.  Thus the
intersection of $\mathcal I$ with every chart is Zariski closed, and hence
$\mathcal I$ itself is Zariski closed.

\smallskip
\noindent\emph{Step 2: forgetting $E$ preserves closedness.}
Here we use the standard projective closed-map theorem: if $Y$ is projective,
then the projection $X\times Y\to X$ maps Zariski-closed sets to
Zariski-closed sets.  Since $\Gr(k,m)$ is projective, applying this to
\[
        \mathcal P\times\Gr(k,m)\longrightarrow\mathcal P
\]
shows that
\[
        \mathcal W=\pi(\mathcal I)
\]
is Zariski closed.

\end{proof}

\subsection{Tevelev and the homogeneous case}

Let $P$ be a $d$-homogeneous polynomial and let $A$ be its associated
symmetric $d$-linear form, so that $P(x)=A(x,\ldots,x)$.  By polarization,
for every linear subspace $E$ one has
\[
        P|_E=0
        \quad\Longleftrightarrow\quad
        A|_{E^d}=0.
\]
Thus the zero subspaces in the sense of Aron--Rueda are exactly the isotropic
subspaces in Tevelev's terminology.  For a symmetric $d$-linear form a
subspace $E$ is called isotropic when its restriction to $E^d$ vanishes
identically.  Tevelev determined the maximal isotropic
dimension for forms in general position.  We need only the symmetric case.

\begin{theorem}[Tevelev, symmetric case]\label{thm:tevelev}
Let $V$ be a complex vector space of dimension $m$, let $d\ge3$, and let
$k\ge1$.  A symmetric $d$-linear form in general position on $V$ has a
$k$-dimensional isotropic subspace if and only if
\[
        m\ge
        k+\frac1k\binom{d+k-1}{d}.
\]
Equivalently, since $m$ is an integer, the least such dimension is
\[
        k+\left\lceil
        \frac1k\binom{d+k-1}{k-1}
        \right\rceil.
\]
\end{theorem}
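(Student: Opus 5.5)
The plan is to prove both directions of \cref{thm:tevelev} with the incidence correspondence of \cref{lem:closure}, taking $s=1$, $e_1=d$ and $\mathcal P=\PP(H^0(\PP^{m-1},\mathcal O(d)))$. Since polarization identifies isotropic subspaces of $A$ with zero subspaces of $P(x)=A(x,\ldots,x)$, the statement reduces to this: $\pi:\mathcal I\to\mathcal P$ is surjective exactly when $k(m-k)\ge\binom{d+k-1}{k-1}$. By \cref{lem:closure}, surjectivity is equivalent to dominance.

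\emph{Dimension of $\mathcal I$ and necessity.} The second projection $\mathcal I\to\Gr(k,m)$ has fibre over $E$ equal to the projectivized linear space of forms vanishing on $E$. By the surjectivity of restriction noted around \eqref{eq:number-monomials}, this space has codimension $c=\binom{d+k-1}{k-1}$. Hence $\mathcal I$ is irreducible of dimension
\[
k(m-k)+\dim\mathcal P-c .
\]
If $k(m-k)<c$, then $\dim\mathcal I<\dim\mathcal P$. So $\pi$ is not dominant, and a general form has no $k$-dimensional isotropic subspace.

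\emph{Sufficiency.} Assume $k(m-k)\ge c$. It suffices to find one point $(P,E)\in\mathcal I$ at which the differential of $\pi$ is surjective; then $\pi$ is dominant, and its image is closed, so it is all of $\mathcal P$. Choose coordinates $x\in E\simeq\CC^k$ and $y\in\CC^{m-k}$. A form vanishing on $E$ can be written
\[
P=\sum_{j=1}^{m-k}y_jB_j(x)+O(y^2),\qquad \deg B_j=d-1 .
\]
Deform $E$ to the graph of $\varphi=(\ell_1,\ldots,\ell_{m-k})$, with the $\ell_j$ linear forms on $E$, and deform $P$ to $P+\varepsilon Q$. To first order the incidence condition reads
\[
Q|_E+\sum_j\ell_jB_j=0 .
\]
So $d\pi$ is onto precisely when the multiplication map
\[
(S^1E^*)^{m-k}\to S^dE^*,\qquad (\ell_j)\mapsto\sum_j\ell_jB_j,
\]
is surjective. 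The problem thus reduces to commutative algebra: for $m-k$ general forms $B_j$ of degree $d-1$ in $k$ variables with $k(m-k)\ge\binom{d+k-1}{k-1}$, the degree-$d$ part of the ideal $(B_1,\ldots,B_{m-k})$ should be all of $S^d$. This is exactly the expected generic behaviour of an ideal in the first degree above its generators.

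\emph{The main obstacle} is proving this surjectivity. My first attempt will be to invoke the Hochster--Laksov theorem, which says that general forms of a fixed degree $t$ span a subspace of maximal possible dimension in degree $t+1$; here $t=d-1\ge2$. Failing that, I would exhibit explicit monomial choices of the $B_j$ whose multiples by the variables cover all degree-$d$ monomials, and then pass to general $B_j$ by semicontinuity of rank.

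The hypothesis $d\ge3$ is essential here, since for $t=1$ the statement is false. Indeed, for $d=2$ the linear forms $B_j$ span at most a $\min(m-k,k)$-dimensional space $W$. If $m-k<k$, the products $\sum_j\ell_jB_j$ lie in $W\cdot S^1E^*$, which has dimension $\binom{k+1}{2}-\binom{k-(m-k)+1}{2}<\binom{k+1}{2}$, even when $k(m-k)\ge\binom{k+1}{2}$. This matches the exceptional value $h(2,k)=2k$ in \cref{thm:hom-main}.

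Finally, the integer reformulation follows from $\binom{d+k-1}{d}=\binom{d+k-1}{k-1}$ and $m\in\mathbb Z$.
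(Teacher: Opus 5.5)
Your argument is correct, but it takes a different route from the paper. The paper gives no proof of \cref{thm:tevelev}: it imports Tevelev's Theorem~1 as a black box. It then adds only \cref{lem:closure} (to pass from a general form to every form, \cref{cor:tevelev-universal}) and \cref{lem:incidence-dimension} (sharpness), and the latter is exactly your necessity half.

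Your sufficiency half replaces the citation by a first-order computation on $\mathcal I$ together with the Hochster--Laksov theorem. This is sound, provided you make two points explicit.

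First, $\mathcal I$ is a projective bundle over $\Gr(k,m)$, hence smooth. So its tangent space at $(P,E)$ is the full solution space of $Q|_E+\sum_j\ell_jB_j=0$, and a single point with surjective differential forces dominance.

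Second, quote Hochster--Laksov in its maximal-rank form: for a general $W\subset S^tE^*$ with $t\ge2$, the multiplication $S^1E^*\otimes W\to S^{t+1}E^*$ is injective or surjective. It is the surjective half that you need, with $t=d-1$, at $m-k=\lceil\binom{d+k-1}{k-1}/k\rceil$. Absence of linear syzygies for $\lfloor\binom{d+k-1}{k-1}/k\rfloor$ forms would not suffice when $k$ does not divide $\binom{d+k-1}{k-1}$.

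With that theorem quoted, your fallback via explicit monomial choices is unnecessary. In any case, a single good choice of the $B_j$ suffices, so no semicontinuity step is needed.

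As for what each route buys: the paper's citation is shorter but hides why the naive parameter count is exact. Your argument shows the mechanism, namely linear syzygies of degree-$(d-1)$ forms. It also explains the hypothesis $d\ge3$: the Koszul relations among the $B_j$ live in degree $2(d-1)$, which equals $d$ only for $d=2$. That is precisely the case where you correctly show the count fails and Witt's theorem takes over. Either way, existence ultimately rests on an external theorem, Tevelev's in one case and Hochster--Laksov's in the other.
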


The theorem is Tevelev's result on isotropic subspaces of symmetric
polylinear forms \cite[Theorem~1]{Tevelev2001}.  For $d=2$, the universal
threshold is $m\ge2k$; this follows from \cref{lem:quadratic-witt} below.

\begin{corollary}[Generic existence becomes universal]\label{cor:tevelev-universal}
Let $d\ge3$ and set
\[
 m_0=
 k+\left\lceil
 \frac1k\binom{d+k-1}{k-1}
 \right\rceil.
\]
Then every $d$-homogeneous polynomial on $\CC^{m_0}$ vanishes on a
$k$-dimensional linear subspace.
\end{corollary}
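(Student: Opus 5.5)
The plan is to combine \cref{thm:tevelev} with the closed-incidence principle \cref{lem:closure}, specialised to a single form. We take $s=1$, $e_1=d$ and $m=m_0$. When $k\ge m_0$ the statement is vacuous or trivial, so we may assume $1\le k<m_0$; in fact $m_0\ge k+1$ always holds because the ceiling is at least $1$. The parameter space is then $\mathcal P=\PP(H^0(\PP^{m_0-1},\mathcal O(d)))$, which is the projectivized space of nonzero $d$-homogeneous polynomials on $\CC^{m_0}$. The set $\mathcal W\subset\mathcal P$ consists of the classes $[P]$ of forms that vanish on some $k$-dimensional linear subspace. The zero polynomial has no class in $\mathcal P$, but it vanishes on every subspace, so we may ignore it.

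First we establish generic existence. The polarization identity recorded just before \cref{thm:tevelev} says that $P\mapsto A$ is a linear isomorphism between $d$-homogeneous polynomials and symmetric $d$-linear forms on $\CC^{m_0}$. Under this isomorphism, $P|_E=0$ holds exactly when $E$ is isotropic for $A$. By \cref{thm:tevelev}, since $m_0$ satisfies the inequality there, a symmetric $d$-linear form in general position has a $k$-dimensional isotropic subspace. So there is a nonempty Zariski-open set $U$ of forms, which we may take to be a cone since isotropy is invariant under scaling, such that every $P\in U\setminus\{0\}$ has $[P]\in\mathcal W$. The image of $U\setminus\{0\}$ in $\mathcal P$ is then a nonempty Zariski-open subset of $\mathcal P$ contained in $\mathcal W$.

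Second, we pass from generic to universal existence. By \cref{lem:closure}, $\mathcal W$ is Zariski closed in $\mathcal P$. Since $\mathcal P$ is a projective space, it is irreducible, so every nonempty Zariski-open subset is dense. Therefore $\mathcal W$ is a closed set containing a dense subset, and hence $\mathcal W=\mathcal P$. This means every nonzero $d$-homogeneous polynomial on $\CC^{m_0}$ vanishes on a $k$-dimensional subspace, and the zero polynomial does so trivially.

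The main obstacle is the translation of ``in general position'' in Tevelev's theorem into ``contains a nonempty Zariski-open subset of the projective parameter space.'' We must check that Tevelev's genericity refers to a nonempty Zariski-open set, rather than merely a dense or measure-theoretically large set, in the space of symmetric forms. We must also check that it descends to $\mathcal P$ through the linear polarization isomorphism and the $\CC^\times$-quotient. If Tevelev's statement only gives a weaker notion, say a dense set, the argument still goes through, because a closed set containing a dense subset is the whole space. So the fallback is to use only density together with the closedness of $\mathcal W$.
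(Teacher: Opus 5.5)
Your proposal is correct and is essentially the paper's own argument. \cref{thm:tevelev} places a nonempty Zariski-open, hence dense, subset of $\PP(H^0(\PP^{m_0-1},\mathcal O(d)))$ inside $\mathcal W$, \cref{lem:closure} makes $\mathcal W$ closed, irreducibility then forces $\mathcal W$ to be the whole space, and the zero polynomial is trivial. Your extra remarks are correct refinements of details the paper leaves implicit: saturating the open set to a cone before projectivizing, and noting that mere density would already suffice.
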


\begin{proof}
Let
\[
\mathcal P_d=\PP\!\left(H^0(\PP^{m_0-1},\mathcal O(d))\right),
\]
the projective space of nonzero $d$-homogeneous polynomials on $\CC^{m_0}$,
and let $\mathcal W_d\subset
\mathcal P_d$ be the subset consisting of forms admitting a
$k$-dimensional zero subspace.  It is the image of the incidence
correspondence considered in \cref{lem:closure}; hence $\mathcal W_d$ is
Zariski closed.  By \cref{thm:tevelev}, $\mathcal W_d$ contains the
nonempty Zariski-open set of forms in general position.  Since
$\mathcal P_d$ is irreducible, this open set is dense.  A closed subset
containing a dense subset is the entire space, so
$\mathcal W_d=\mathcal P_d$.  The zero polynomial is immediate.
\end{proof}

\subsection{The dimension count}

The opposite direction requires no generic Fano theorem.

\begin{lemma}[Dimension of the incidence correspondence]
\label{lem:incidence-dimension}
With the notation above,
\[
\dim\mathcal I
=
\dim\mathcal P+k(m-k)
-
\sum_{i=1}^s\binom{e_i+k-1}{k-1}.
\]
Hence, if
\begin{equation}\label{eq:negative-incidence}
k(m-k)
<
\sum_{i=1}^s\binom{e_i+k-1}{k-1},
\end{equation}
then $\mathcal W$ is a proper closed subset of $\mathcal P$.  In particular,
a general system has no common $k$-dimensional zero subspace.
\end{lemma}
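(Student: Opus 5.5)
We compute $\dim\mathcal I$ by projecting to the Grassmannian factor instead of to $\mathcal P$. Let
\[
        \rho:\mathcal I\longrightarrow\Gr(k,m)
\]
be the second projection. For a fixed $E\in\Gr(k,m)$, the fibre $\rho^{-1}(E)$ is the product over $i$ of the projectivizations of the linear subspaces
\[
        L_i(E)=\{G_i\in H^0(\PP^{m-1},\mathcal O(e_i)) : G_i|_E=0\}.
\]
Restriction to $E$ is surjective onto the space of degree-$e_i$ forms on $E$, whose dimension is \eqref{eq:number-monomials}. Hence each $L_i(E)$ has codimension exactly $\binom{e_i+k-1}{k-1}$. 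Therefore $\rho^{-1}(E)$ is a product of projective spaces of total dimension
\[
\dim\mathcal P-\sum_i\binom{e_i+k-1}{k-1}.
\]
This number is independent of $E$. It is understood that if some $L_i(E)=0$, the fibre is empty; this can only happen when $E$ is the whole space, which is excluded by $k<m$, since then $L_i(E)$ has positive dimension.

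Next we show that $\rho$ is a locally trivial fibration in the Zariski topology. It is homogeneous under $GL_m$, which acts transitively on $\Gr(k,m)$. Concretely, over the standard affine chart of graphs of maps $E_0\to F_0$, a linear change of coordinates depending polynomially on the chart parameters carries $E_0$ to $E$. It therefore carries $L_i(E_0)$ isomorphically onto $L_i(E)$, which trivializes $\rho$ over the chart. So $\mathcal I$ is covered by open sets isomorphic to $U\times\prod_i\PP(L_i(E_0))$, with $U\subset\Gr(k,m)$ an affine space of dimension $k(m-k)$ by \eqref{eq:grass-dim}. In particular $\mathcal I$ is irreducible, being a projective-bundle-like object over an irreducible base, and its dimension is base plus fibre. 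This gives the formula
\[
\dim\mathcal I=\dim\mathcal P+k(m-k)-\sum_{i=1}^s\binom{e_i+k-1}{k-1}.
\]

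For the consequence, assume \eqref{eq:negative-incidence}. Then $\dim\mathcal I<\dim\mathcal P$. The image of a morphism cannot have dimension larger than the source, so $\dim\mathcal W\le\dim\mathcal I<\dim\mathcal P$. By \cref{lem:closure}, $\mathcal W$ is closed. Since $\mathcal P$ is irreducible, a closed subset of strictly smaller dimension is proper. Its complement is then a nonempty Zariski-open set of systems with no common $k$-dimensional zero subspace, and it is dense by irreducibility.

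The main point requiring care is the passage from a constant fibre dimension to the total dimension. I would handle it via the explicit local trivialization above rather than appeal to the general fibre-dimension theorem, because the trivialization also gives irreducibility for free. The remaining ingredients, surjectivity of restriction and $\dim\pi(\mathcal I)\le\dim\mathcal I$, are already recorded in the paper or are standard.
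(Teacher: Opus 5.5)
Your proof is correct and follows the paper's argument: you fibre $\mathcal I$ over $\Gr(k,m)$ and use surjectivity of restriction to see that each fibre is a product of linear subspaces of codimension $\binom{e_i+k-1}{k-1}$ in the factors of $\mathcal P$. You then add the base dimension $k(m-k)$ and conclude via $\dim\pi(\mathcal I)\le\dim\mathcal I$ and \cref{lem:closure}; your explicit $GL_m$-translation trivialization over the graph charts justifies the step ``dimension equals base plus fibre'' (and gives irreducibility), which the paper leaves implicit.
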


\begin{proof}
Fix $E\in\Gr(k,m)$.  For each $i$, restriction gives a surjective linear map
\[
H^0(\PP^{m-1},\mathcal O(e_i))
\longrightarrow
H^0(\PP(E),\mathcal O_{\PP(E)}(e_i)).
\]
As explained above, the target has dimension
\[
        \binom{e_i+k-1}{k-1}.
\]
Therefore, over a fixed $E$, the condition $G_i|_E=0$ has exactly this
codimension in the $i$th parameter factor.  The forms $G_1,\ldots,G_s$
belong to different factors of the product parameter space $\mathcal P$, so
these codimensions add.  Thus the fiber over a fixed $E$ has dimension
\[
\underbrace{\dim\mathcal P}_{\text{all systems}}
-
\underbrace{\sum_{i=1}^s\binom{e_i+k-1}{k-1}}_
            {\text{conditions for vanishing on }E}.
\]
Since the possible subspaces $E$ themselves form $\Gr(k,m)$, of dimension
$k(m-k)$, we obtain
\[
\dim\mathcal I
=
\underbrace{k(m-k)}_{\text{choice of }E}
+
\underbrace{\dim\mathcal P
-
\sum_{i=1}^s\binom{e_i+k-1}{k-1}}_
{\text{systems vanishing on that }E},
\]
which is the desired formula.

If \eqref{eq:negative-incidence} holds, then
\[
        \dim\mathcal I<\dim\mathcal P.
\]
Since the dimension of an image cannot exceed the dimension of its source,
$\mathcal W=\pi(\mathcal I)$ cannot equal $\mathcal P$.  By
\cref{lem:closure}, it is closed, hence proper.
\end{proof}

\subsection{Systems of forms}

Write
\[
        \Delta(m)
        =
        k(m-k)-c(\mathbf e,k),
        \qquad
        c(\mathbf e,k)
        =
        \sum_{i=1}^s\binom{e_i+k-1}{k-1}.
\]
The first term counts the parameters available for choosing $E$, and the
second counts the equations imposed by requiring all forms to vanish on
$E$.  The negative side is unconditional:
\[
\Delta(m)<0
\quad\Longrightarrow\quad
\dim\mathcal I<\dim\mathcal P
\quad\Longrightarrow\quad
\text{some systems have no }E.
\]
On the nonnegative side, Debarre--Manivel gives existence for a general
system when the second part of $\delta_-$, namely $N-2r-s\ge0$, also holds.
Closed incidence then promotes this generic existence to every system. Thus
the first integer for which $\Delta(m)\ge0$ is the natural candidate threshold,
and it is the exact threshold whenever that additional range inequality holds
there.
Solving
\[
        k(m-k)\ge c(\mathbf e,k)
\]
gives
\[
        m\ge k+\frac{c(\mathbf e,k)}{k},
\]
and hence the candidate threshold
\[
        m_0=
        k+\left\lceil\frac{c(\mathbf e,k)}{k}\right\rceil.
\]

This gives the following threshold principle.

\begin{proposition}[Threshold principle for systems]\label{prop:system-threshold}
Let $e_1,\ldots,e_s\ge2$ and let $k\ge2$. Set
\[
c(\mathbf e,k)
=
\sum_{i=1}^s\binom{e_i+k-1}{k-1},
\qquad
m_0
=
k+\left\lceil\frac{c(\mathbf e,k)}{k}\right\rceil.
\]
Assume
\begin{equation}\label{eq:system-safe-range}
        m_0\ge 2k+s-1.
\end{equation}
Then $m_0$ is the least integer $m$ such that every tuple of homogeneous
forms
\[
        G_i:\CC^m\longrightarrow\CC,
        \qquad \deg G_i=e_i,
        \quad i=1,\ldots,s,
\]
has a common $k$-dimensional zero subspace.
\end{proposition}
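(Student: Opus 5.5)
Two things need proof: every system on $\CC^{m_0}$ has a common $k$-dimensional zero subspace, and this fails for some system on $\CC^{m_0-1}$. We translate to the projective setting of \cref{thm:dm-fano}: $N=m-1$ and $r=k-1$. Since $\binom{e_i+r}{r}=\binom{e_i+k-1}{k-1}$, we get
\[
\delta(N,\mathbf e,r)=k(m-k)-c(\mathbf e,k)=\Delta(m),
\qquad
N-2r-s=m-2k+1-s .
\]

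\emph{Existence at $m=m_0$.} The choice of $m_0$ as $k+\lceil c(\mathbf e,k)/k\rceil$ gives $k(m_0-k)\ge c(\mathbf e,k)$, so $\delta\ge0$. The hypothesis \eqref{eq:system-safe-range} gives $N-2r-s=m_0-2k+1-s\ge0$. Hence $\delta_-(N,\mathbf e,r)\ge0$. Since all $e_i\ge2$, \cref{thm:dm-fano} applies and says that for a general system $F_{k-1}(X)\neq\emptyset$. By \eqref{eq:proj-dictionary}, such a system has a common $k$-dimensional zero subspace.

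\emph{Passing from general to every system.} The set of general systems contains a nonempty Zariski-open subset of the irreducible space $\mathcal P$, so it is dense. This set lies in $\mathcal W$, which is closed by \cref{lem:closure}. Therefore $\mathcal W=\mathcal P$, exactly as in the proof of \cref{cor:tevelev-universal}. Tuples in which some $G_i$ is the zero form need a short extra argument. Drop the zero forms; the remaining tuple has smaller $s$ and smaller $c$. The easiest route, though, is to replace each zero $G_i$ by an arbitrary nonzero form of degree $e_i$, say $z_1^{e_i}$. A common zero subspace of the modified system is also one for the original, since the zero form vanishes everywhere. So every tuple on $\CC^{m_0}$ has a common zero subspace.

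\emph{Failure below $m_0$.} Here the point is that vanishing on a $k$-dimensional subspace is inherited by smaller ambient spaces in the useful direction. Let $m<m_0$. If $m\le k$, the statement is trivial or vacuous: for $m<k$ there is no $k$-dimensional subspace at all, and for $m=k$ the only subspace is $\CC^k$, on which a nonzero form does not vanish. Otherwise $k<m\le m_0-1$. Minimality of the ceiling gives $k(m-k)<c(\mathbf e,k)$, which is \eqref{eq:negative-incidence}. \cref{lem:incidence-dimension} then shows that $\mathcal W$ is a proper subset of $\mathcal P$, so some system on $\CC^m$ has no common $k$-dimensional zero subspace. This proves that $m_0$ is least.

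\emph{Main obstacle.} The substantive step is the existence direction. It rests entirely on matching the indices of Debarre--Manivel correctly: $N=m-1$, $r=k-1$, and $s$ forms. With that matching, the range condition $N-2r-s\ge0$ becomes exactly \eqref{eq:system-safe-range}, and the hypothesis $e_i\ge2$ is needed to apply \cref{thm:dm-fano}. Everything else is the closed-incidence argument already established in \cref{lem:closure} and \cref{lem:incidence-dimension}.
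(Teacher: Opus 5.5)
Your proposal is correct and follows the paper's proof essentially step by step: the same index matching $N=m-1$, $r=k-1$ into Debarre--Manivel, the same closed-incidence density argument with zero components replaced by nonzero forms, and the same sharpness cases $m<k$, $m=k$, $k<m<m_0$ via \cref{lem:incidence-dimension}. The paper also shows that the property persists for every $m>m_0$, by restricting the forms to an $m_0$-dimensional subspace $F$ and noting that a zero subspace inside $F$ also works in $\CC^m$. That step is not needed for leastness, but it is the correct version of the ``inheritance'' remark at the start of your sharpness paragraph; as you wrote it, that sentence is garbled (the inheritance goes from $F$ up to $\CC^m$, not down) and plays no role in your argument.
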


\begin{proof}
Let $m=m_0$ and put
\[
        N=m_0-1,\qquad r=k-1.
\]
Then
\[
\begin{aligned}
\delta(N,\mathbf e,r)
&=(r+1)(N-r)-\sum_{i=1}^s\binom{e_i+r}{r}\\
&=k(m_0-k)-c(\mathbf e,k)\ge0,
\end{aligned}
\]
by the definition of $m_0$. Also,
\[
N-2r-s=m_0-2k-s+1\ge0
\]
by \eqref{eq:system-safe-range}. Hence $\delta_-(N,\mathbf e,r)\ge0$.
Assume first that every $G_i$ is nonzero. Let $\mathcal P$ be the product
parameter space introduced above, and let $W\subset\mathcal P$ be the set of
systems admitting a common $k$-dimensional zero subspace. By
\cref{lem:closure}, $W$ is Zariski closed. Since
$\delta_-(N,\mathbf e,r)\ge0$, \cref{thm:dm-fano} shows that $W$
contains a nonempty Zariski-open set of general systems. The space
$\mathcal P$ is irreducible, so this open set is dense. Hence the closed set
$W$ equals $\mathcal P$. Thus every tuple of nonzero forms has a common
projective $(k-1)$-plane, equivalently a common $k$-dimensional zero subspace.
If some components are identically zero, replace each of them temporarily by
an arbitrary nonzero form of the same degree and apply the preceding argument
to the modified system. The resulting subspace also works for the original
system, since a zero component imposes no condition.

\smallskip
\noindent\emph{Sharpness.}
If $m<k$, no $k$-dimensional subspace of $\CC^m$ exists.  If $m=k$, the only
candidate is $\CC^k$ itself, and any nonzero form gives a counterexample.
Now assume
\[
        k<m<m_0.
\]
The integer $m_0-k$ is the least integer $q$ satisfying
\[
        kq\ge c(\mathbf e,k).
\]
Since $m-k<m_0-k$, it follows that
\[
        k(m-k)<c(\mathbf e,k).
\]
By \cref{lem:incidence-dimension},
\[
        \dim\mathcal I<\dim\mathcal P.
\]
Thus the incidence image cannot cover the parameter space, so there are
tuples with no common $k$-dimensional zero subspace.  Hence no smaller $m$
has the universal property.

\smallskip
\noindent\emph{Larger dimensions.}
Let $m>m_0$.  Choose any $m_0$-dimensional vector subspace
$F\subset\CC^m$ and restrict every $G_i$ to $F$.  The case $m_0$ gives a $k$-dimensional subspace
$E\subset F$ on which all restrictions vanish.  Then the original forms
also vanish on $E$.  Thus the property persists for every $m\ge m_0$.
\end{proof}

\section{Proof of the homogeneous formula}

We first isolate the elementary one-dimensional case.

\begin{lemma}\label{lem:k1hom}
For every $d\ge2$,
\[
        h(d,1)=2.
\]
\end{lemma}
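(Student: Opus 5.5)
The statement has two halves: every $d$-homogeneous polynomial on $\CC^2$ vanishes on some line, and this fails on $\CC^1$. I will prove each half directly, without the Grassmannian machinery, since a $1$-dimensional zero subspace is just a nonzero zero of $P$.

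\emph{Lower bound.} On $\CC^1$, the only $1$-dimensional subspace is $\CC^1$ itself. The polynomial $P(z)=z^d$ is $d$-homogeneous and does not vanish identically there. Hence $h(d,1)\ge2$. This is also the case $m=k$ in the sharpness part of the proof of \cref{prop:system-threshold}.

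\emph{Upper bound.} Let $P$ be $d$-homogeneous on $\CC^2$. If $P\equiv0$, any line works. Otherwise write
\[
P(z_1,z_2)=\sum_{j=0}^d a_j z_1^{j}z_2^{d-j}.
\]
If $a_d=0$, then $P(1,0)=0$, and by homogeneity $P$ vanishes on the line $\CC(1,0)$. If $a_d\neq0$, the one-variable polynomial $t\mapsto P(t,1)$ has degree $d\ge2\ge1$. By the fundamental theorem of algebra it has a root $t_0\in\CC$. Then $P(\lambda t_0,\lambda)=\lambda^dP(t_0,1)=0$ for all $\lambda$, so $P$ vanishes on the line spanned by $(t_0,1)$. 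By \eqref{eq:proj-dictionary}, this is the statement that the hypersurface $V(P)\subset\PP^1$ is nonempty.

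\emph{Larger $m$.} For $m>2$, restrict $P$ to any $2$-dimensional subspace $F$ and apply the case above, exactly as in the ``larger dimensions'' step of \cref{prop:system-threshold}. Together these show that $2$ is the least dimension, so $h(d,1)=2$.

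No step is a real obstacle. The only points needing care are the dehomogenization cases: the root at infinity when $a_d=0$, and the zero polynomial. Note that the general formula of \cref{thm:hom-main} with $k=1$ gives $1+\binom{d}{0}=2$, which matches, but Tevelev's theorem is not needed here.
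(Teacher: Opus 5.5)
Your proof is correct and follows essentially the same route as the paper: $z^d$ on $\CC^1$ gives the lower bound, and dehomogenization plus the fundamental theorem of algebra on $\CC^2$ gives the upper bound. Your separate handling of $a_d=0$ (the root at infinity) and of $P\equiv0$ spells out what the paper compresses into ``dehomogenizing in a chart in which the result is nonconstant.''
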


\begin{proof}
One variable does not suffice, since $z\mapsto z^d$ has no nonzero zero.
Now let $P(z,w)$ be a nonzero $d$-homogeneous polynomial in two variables.
Over $\CC$, every binary homogeneous polynomial factors into homogeneous
linear factors.  Equivalently, after dehomogenizing in a chart in which the
result is nonconstant, the fundamental theorem of algebra gives a root.
Thus $P$ has a nonzero zero, and two variables always suffice.
\end{proof}

We also use the following standard consequence of the Witt
decomposition over $\CC$.

\begin{lemma}[Complex quadratic forms]\label{lem:quadratic-witt}
Let $Q$ be a quadratic form of rank $\rho$ on a complex vector space $V$ of
dimension $m$.  Then the largest possible dimension of a linear subspace
$E\subset V$ on which $Q$ vanishes identically is
\[
        (m-\rho)+\left\lfloor\frac{\rho}{2}\right\rfloor
        =
        m-\left\lceil\frac{\rho}{2}\right\rceil.
\]
Hence every quadratic form on $\CC^{2k}$ vanishes on a
$k$-dimensional subspace, while a nondegenerate quadratic form on
$\CC^{2k-1}$ has no such $k$-dimensional subspace.
\end{lemma}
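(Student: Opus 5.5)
We diagonalize $Q$, split $V$ into its radical and a nondegenerate part, and treat that part with a hyperbolic-pair construction (lower bound) and a dimension/orthogonality argument (upper bound).

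\emph{Normal form.} Let $B$ be the symmetric bilinear form with $Q(x)=B(x,x)$; over $\CC$ we have characteristic $\neq2$, so polarization holds. By diagonalization over $\CC$ (every element is a square), choose coordinates with
\[
Q=z_1^2+\cdots+z_\rho^2 .
\]
Let $R=\{z_1=\cdots=z_\rho=0\}$ be the radical, of dimension $m-\rho$, and let $U$ be the span of the first $\rho$ coordinate vectors, so $V=U\oplus R$ and $Q|_U$ is nondegenerate. By polarization, $Q|_E=0$ if and only if $B|_{E\times E}=0$, so zero subspaces are exactly totally isotropic subspaces.

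\emph{Lower bound.} Pair coordinates as $(z_{2j-1},z_{2j})$ for $j\le\lfloor\rho/2\rfloor$. Since
\[
z_{2j-1}^2+z_{2j}^2=(z_{2j-1}+iz_{2j})(z_{2j-1}-iz_{2j}),
\]
the vectors $e_{2j-1}+ie_{2j}$ span a totally isotropic subspace of $U$ of dimension $\lfloor\rho/2\rfloor$. Adding $R$, which is $B$-orthogonal to everything, gives an isotropic subspace of dimension $(m-\rho)+\lfloor\rho/2\rfloor$.

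\emph{Upper bound.} This is the only step with content. Let $E$ be totally isotropic. Then $E+R$ is also totally isotropic, so we may assume $R\subset E$. Let $E'=E\cap U$; projecting along $R$ shows $E=E'\oplus R$, and $E'$ is totally isotropic in the nondegenerate space $U$. Then $E'\subset E'^{\perp}$ inside $U$, and nondegeneracy gives $\dim E'^{\perp}=\rho-\dim E'$. Hence $2\dim E'\le\rho$, so $\dim E'\le\lfloor\rho/2\rfloor$ and $\dim E\le(m-\rho)+\lfloor\rho/2\rfloor$. The identity $(m-\rho)+\lfloor\rho/2\rfloor=m-\lceil\rho/2\rceil$ is immediate.

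\emph{Consequences.} For $m=2k$ we have $\rho\le 2k$, so the bound is at least $2k-k=k$, and every quadratic form on $\CC^{2k}$ has a $k$-dimensional zero subspace. For a nondegenerate form on $\CC^{2k-1}$ we have $\rho=2k-1$, so the maximum is $2k-1-k=k-1<k$, and no such subspace exists.
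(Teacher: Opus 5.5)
Your proof is correct and follows essentially the same route as the paper: split $V$ into the radical $R$ and a nondegenerate complement $U$, use the isotropic vectors $e_{2j-1}+ie_{2j}$ for the lower bound, and use $W\subset W^{\perp}$ with $\dim W+\dim W^{\perp}=\rho$ for the upper bound. The only differences are cosmetic. You enlarge $E$ to $E+R$ and intersect with $U$, whereas the paper projects $E$ onto $U$ along $R$. You also diagonalize globally rather than only on $U$, and you spell out the two consequences explicitly.
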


\begin{proof}
Associate with $Q$ the symmetric bilinear form
\[
        B(x,y)
        =
        \frac12\bigl(Q(x+y)-Q(x)-Q(y)\bigr).
\]
Then $Q(x)=B(x,x)$.  Moreover, if $Q$ vanishes identically on a linear
subspace $E$, then for $x,y\in E$,
\[
        B(x,y)
        =
        \frac12\bigl(Q(x+y)-Q(x)-Q(y)\bigr)
        =0.
\]
Thus $Q|_E=0$ exactly when $B$ vanishes on $E\times E$.

Let
\[
        R=\{x\in V:B(x,y)=0\text{ for every }y\in V\}
\]
be the radical.  Since the rank is $\rho$,
\[
        \dim R=m-\rho.
\]
Choose a complementary subspace $U$ so that
\[
        V=R\oplus U.
\]
The restriction of $B$ to $U$ is nondegenerate and $\dim U=\rho$.

We first obtain an upper bound.  If $E$ is a subspace with
$B|_{E\times E}=0$, project $E$ onto $U$ along $R$.  The kernel of this
projection is $E\cap R$, and the image is a totally isotropic subspace of
the nondegenerate space $U$.  If $W\subset U$ is totally isotropic, then
\[
        W\subset W^\perp.
\]
Nondegeneracy gives
\[
        \dim W+\dim W^\perp=\rho,
\]
and therefore
\[
        2\dim W\le\rho.
\]
It follows that
\[
\dim E
\le
\dim R+\left\lfloor\frac{\rho}{2}\right\rfloor
=
(m-\rho)+\left\lfloor\frac{\rho}{2}\right\rfloor.
\]

For the matching lower bound, over $\CC$ a nondegenerate quadratic form can
be diagonalized.  After a linear change of coordinates on $U$ we may write
\[
        Q|_U=u_1^2+\cdots+u_\rho^2.
\]
For each $j\le\lfloor\rho/2\rfloor$, set
\[
        v_j=e_{2j-1}+i e_{2j}.
\]
Then $Q(v_j)=0$, and vectors arising from distinct pairs are mutually
orthogonal for $B$.  Hence $Q$ vanishes on
$\operatorname{span}\{v_1,\ldots,v_{\lfloor\rho/2\rfloor}\}$.  Adding the
radical $R$ gives a zero subspace of dimension
\[
        (m-\rho)+\left\lfloor\frac{\rho}{2}\right\rfloor.
\]
This matches the upper bound.
\end{proof}

\begin{proof}[Proof of \cref{thm:hom-main}]
For $d=1$, a nonzero linear functional on $\CC^m$ has kernel of dimension
$m-1$, while the zero functional causes no difficulty. Hence
\[
        h(1,k)=k+1.
\]
The case $k=1$ and $d\ge2$ is \cref{lem:k1hom}. We may therefore assume
$d\ge2$ and $k\ge2$.

Suppose first that
$d\ge3$, and set
\[
 m_0
 =
 k+\left\lceil
 \frac1k\binom{d+k-1}{k-1}
 \right\rceil.
\]
By \cref{cor:tevelev-universal}, every $d$-homogeneous polynomial on
$\CC^{m_0}$ has a $k$-dimensional zero subspace.  Thus $h(d,k)\le m_0$.
If $m<m_0$, then
\[
 k(m-k)<\binom{d+k-1}{k-1},
\]
and \cref{lem:incidence-dimension}, applied
to a single degree-$d$ form, shows that a general form has no such subspace.
Hence the universal property fails below $m_0$, and therefore
\[
        h(d,k)=m_0.
\]

It remains to consider $d=2$. By \cref{lem:quadratic-witt}, every complex
quadratic form on $\CC^{2k}$ vanishes on a $k$-dimensional subspace, whereas
a nondegenerate quadratic form on $\CC^{2k-1}$ does not. Consequently
\[
        h(2,k)=2k.
\]
\end{proof}

\section{Proof of the bounded-degree formula}

We begin with the elementary observation that separates this problem from
the homogeneous one.

\begin{lemma}\label{lem:components}
Let
\[
        P=P_0+P_1+\cdots+P_d
\]
be the homogeneous decomposition of a polynomial on $\CC^m$, and let
$E\subset\CC^m$ be linear. Then
\[
        P|_E\text{ is constant}
\]
if and only if
\[
        P_j|_E=0
        \qquad (j=1,\ldots,d).
\]
\end{lemma}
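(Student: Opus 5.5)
The plan is to parametrize $E$ linearly and compare coefficients degree by degree. Fix a basis $v_1,\ldots,v_k$ of $E$; if $E=\{0\}$ both conditions hold trivially, so we may assume $k=\dim E\ge1$. Then set
\[
        \phi(x)=P(x_1v_1+\cdots+x_kv_k),\qquad x\in\CC^k .
\]
Since $x\mapsto x_1v_1+\cdots+x_kv_k$ is linear, each $\phi_j(x):=P_j(x_1v_1+\cdots+x_kv_k)$ is a homogeneous polynomial of degree $j$ in $x$, possibly the zero polynomial. We also have $\phi=\phi_0+\phi_1+\cdots+\phi_d$ with $\phi_0=P_0$. Thus the decomposition of $P|_E$ into homogeneous components is obtained simply by restricting each $P_j$.

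For the implication ($\Leftarrow$), assume $P_j|_E=0$ for $j\ge1$. Then $P|_E=P_0$, which is constant.

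For ($\Rightarrow$), suppose $P|_E\equiv c$. Then $\phi-c$ vanishes identically as a function on $\CC^k$. A polynomial function on $\CC^k$ that vanishes identically is the zero polynomial, since $\CC$ is infinite. So every homogeneous component of $\phi-c$ is zero, and in particular $\phi_j=0$ for $j=1,\ldots,d$. A slicker way to see the same conclusion uses scaling along rays. For fixed $x\in E$, we have $P(tx)=\sum_j t^jP_j(x)$, and this is constant in $t\in\CC$. Hence every coefficient of $t^j$ with $j\ge1$ vanishes, which gives $P_j(x)=0$ directly, with no need to choose coordinates. I would present this ray argument, since it is shorter. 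The only point that needs care is the fact that a univariate polynomial which is constant on $\CC$ has zero positive-degree coefficients, and this is standard.

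The argument also recovers the remark in the paper that the constant value must be $P(0)=P_0$. There is no real obstacle here. The lemma is a bookkeeping statement, and its only role is to reduce the bounded-degree problem to the common vanishing of the system $(P_1,\ldots,P_d)$ on $E$.
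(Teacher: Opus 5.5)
Your proof is correct, and the ray argument you plan to present is exactly the paper's proof: for $x\in E$ the polynomial $t\mapsto\sum_{j=1}^d t^jP_j(x)=P(tx)-P(0)$ vanishes on all of $\CC$, so its coefficients vanish, and the converse is immediate. The coordinate argument you sketch first is also valid but not needed.
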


\begin{proof}
If $P|_E$ is constant, then for every $x\in E$ and $t\in\CC$,
\[
        P(tx)=P(0).
\]
Hence
\[
        \sum_{j=1}^d t^jP_j(x)=0
        \qquad\text{for every }t\in\CC.
\]
The left-hand side is a polynomial in $t$, so every coefficient is zero.
Thus $P_j(x)=0$ for $j=1,\ldots,d$. The converse is immediate.
\end{proof}

The case $k=1$ is again elementary.

\begin{lemma}\label{lem:k1general}
For every $d\ge1$,
\[
        \mu(d,1)=d+1.
\]
\end{lemma}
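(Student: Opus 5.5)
By \cref{lem:components}, with $k=1$ we need a nonzero vector $x$ such that $P_j(x)=0$ for $j=1,\ldots,d$. Equivalently, we need a common point of the projective zero sets $V(P_1),\ldots,V(P_d)$ in $\PP^{m-1}$. The plan is to prove the upper bound $\mu(d,1)\le d+1$ by the projective dimension theorem, and the lower bound by an explicit polynomial on $\CC^d$.

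\emph{Upper bound.} Let $m=d+1$. Components that vanish identically impose no condition, so we discard them. Each remaining $V(P_j)$ is a hypersurface in $\PP^{d}$; for $j=1$ it is a hyperplane. The projective dimension theorem says that $r$ hypersurfaces in $\PP^{n}$ with $r\le n$ have nonempty common intersection, every component having dimension at least $n-r$. Here $r\le d=n$, so the intersection is nonempty. If one prefers to avoid the general theorem, one can argue by induction. Restrict to the kernel of $P_1$ when $P_1\neq0$; this lowers the dimension by one and removes one form. Then use that $d-1$ forms on $\CC^{d}$ have a common nonzero zero, and finish with the base case given by \cref{lem:k1hom}: a single form in two variables has a nonzero zero. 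The induction step itself, however, needs the dimension theorem, so I would simply cite it. Having an upper bound at $m=d+1$, larger $m$ follow by restricting to a $(d+1)$-dimensional subspace, as in the proof of \cref{prop:system-threshold}.

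\emph{Lower bound.} We need a polynomial of degree at most $d$ on $\CC^d$ that is constant on no line. Take
\[
        P(z)=z_1+z_2^2+\cdots+z_d^d,
\]
with components $P_j=z_j^j$. A common zero of all the $P_j$ forces $z_j=0$ for every $j$, so there is no nonzero common zero, and hence no line on which $P$ is constant. For $m<d$ the same construction with $m$ variables works, or we can restrict this $P$ to a coordinate subspace.

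The only nontrivial ingredient is the projective dimension theorem (Krull's principal ideal theorem) for the upper bound. Everything else is explicit.
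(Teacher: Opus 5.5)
Your proposal is correct and is essentially the paper's proof. You reduce via \cref{lem:components} to finding a common nonzero zero of $P_1,\ldots,P_d$ on $\CC^{d+1}$ and obtain that zero from Krull's principal ideal theorem, which the paper phrases through the Nullstellensatz and the height of the maximal ideal at the origin and you phrase as the projective dimension theorem; for sharpness you use the same example $z_1+z_2^2+\cdots+z_d^d$ on $\CC^d$.
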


\begin{proof}
By \cref{lem:components}, a line $\CC v$ is a constancy subspace exactly
when
\[
        P_1(v)=\cdots=P_d(v)=0.
\]
We use the following standard algebraic fact in its simplest form:
$d$ homogeneous polynomials of positive degree in $d+1$ complex variables
have a common nonzero zero.  Here is the short reason.  Let
\[
        I=(P_1,\ldots,P_d)
        \subset
        \CC[z_0,\ldots,z_d].
\]
If the only common affine zero were the origin, the Nullstellensatz would
give
\[
        \sqrt I=(z_0,\ldots,z_d).
\]
Here $\sqrt I$ is the radical of $I$, and the ideal on the right is the
maximal ideal of the origin.  Its height is $d+1$; geometrically, the origin
has codimension $d+1$ in $\CC^{d+1}$.  On the other hand, Krull's height
theorem says that an ideal generated by only $d$ elements has height at most
$d$.  This is impossible. Hence the forms have a common nonzero zero, or
equivalently a common point of $\PP^d$.

Applying this to $P_1,\ldots,P_d$ shows that every polynomial on
$\CC^{d+1}$ of degree at most $d$ has a constancy line.  Therefore
\[
        \mu(d,1)\le d+1.
\]

For the reverse inequality, work in $\CC^d$ with coordinates
$z_1,\ldots,z_d$ and define
\[
        P(z)=\sum_{j=1}^d z_j^j.
\]
Its $j$-homogeneous component is $P_j(z)=z_j^j$, and the only common affine
zero of $P_1,\ldots,P_d$ is the origin. Thus there is no constancy line, and
$\mu(d,1)>d$.
\end{proof}

\begin{lemma}[A useful size estimate]
\label{lem:bounded-range}
Let $d\ge3$ and $k\ge2$, and put
\[
m_0
=
k+
\left\lceil
\frac{\binom{d+k}{k}-1}{k}
\right\rceil.
\]
Then
\[
        m_0\ge2k+d.
\]
Consequently,
\[
        m_0-1\ge2k+d-1>2k+d-2,
\]
which is more than the range condition needed to apply
\cref{prop:system-threshold} in dimension $m_0-1$ to the
multidegree $(2,3,\ldots,d)$.
\end{lemma}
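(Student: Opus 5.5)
It suffices to show $\lceil(\binom{d+k}{k}-1)/k\rceil\ge k+d$. Since the right-hand side is an integer, this is equivalent to
\[
        \binom{d+k}{k}-1\ge k(k+d),
        \qquad\text{i.e.}\qquad
        \binom{d+k}{k}\ge k^2+kd+1 .
\]
I will prove this for all $d\ge3$, $k\ge2$. The plan is to use monotonicity in $d$. The left side grows in $d$ with increment $\binom{d+k}{k}-\binom{d+k-1}{k}=\binom{d+k-1}{k-1}\ge k$, since $d\ge1$ and $k\ge2$. The right side grows by exactly $k$. So it is enough to treat the base case $d=3$, namely
\[
        \binom{k+3}{3}\ge k^2+3k+1 .
\]

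For the base case, write $\binom{k+3}{3}=\frac{(k+1)(k+2)(k+3)}{6}$ and compare with $k^2+3k+1=(k+1)(k+2)-1$. The inequality becomes
\[
        (k+1)(k+2)\Bigl(\tfrac{k+3}{6}-1\Bigr)\ge -1,
        \qquad\text{i.e.}\qquad
        (k+1)(k+2)(k-3)\ge -6 .
\]
This is clear for $k\ge3$. For $k=2$ the left side is $12\cdot(-1)=-12$, so the inequality fails there. I therefore need to check $k=2$ separately.

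For $k=2$ the quantity is
\[
        m_0=2+\Bigl\lceil\tfrac{\binom{d+2}{2}-1}{2}\Bigr\rceil=2+\Bigl\lceil\tfrac{d(d+3)}{4}\Bigr\rceil,
\]
and I need $\lceil d(d+3)/4\rceil\ge d+2$, i.e.\ $d(d+3)>4(d+1)$, i.e.\ $d^2-d-4>0$. This holds for $d\ge3$, since $9-3-4=2$. Equivalently, at $d=3$ and $k=2$ one has $\binom52=10\ge 4+6+1=11$? No, that fails. The ceiling rescues it, though: $\lceil 9/2\rceil=5=k+d$. So for $k=2$ I will argue with the ceiling directly instead of the strict inequality. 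The induction on $d$ for $k\ge3$ starts from the base case, which holds because $(k+1)(k+2)(k-3)\ge0$.

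The consequences are then immediate. From $m_0\ge 2k+d$ we get $m_0-1\ge 2k+d-1>2k+d-2$. The multidegree $(2,\dots,d)$ has $s=d-1$, so the range condition \eqref{eq:system-safe-range} reads $m\ge 2k+s-1=2k+d-2$. The main obstacle is only the small case $k=2$, where the floor-free inequality fails and the ceiling must be used.
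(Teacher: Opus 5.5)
Your argument is correct and uses the same scheme as the paper's proof: a base case at $d=3$, then Pascal's rule to show that the relevant difference does not decrease in $d$. However, your opening reduction contains a false step, and it is the only reason $k=2$ looked exceptional.

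For an integer $n$ one has $\lceil x\rceil\ge n$ if and only if $x>n-1$, not $x\ge n$. So the correct equivalent target is $\binom{d+k}{k}-1>k(k+d-1)$. Your inequality $\binom{d+k}{k}-1\ge k(k+d)$ is only sufficient: it asks the quantity $D_k(d)=\binom{d+k}{k}-1-k(k+d-1)$ to be at least $k$, rather than merely positive.

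The paper works with the correct target. It has $D_k(3)=\frac{(k-1)k(k+1)}{6}>0$ for every $k\ge2$ and $D_k(d+1)-D_k(d)=\binom{d+k}{k-1}-k>0$, so no case split is needed.

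Your stronger version fails exactly at $(k,d)=(2,3)$, where $D_2(3)=1<2$. Your separate treatment of $k=2$ via $d^2-d-4>0$ is just the correct criterion applied to that case, so the proof is complete. In a write-up, replace ``equivalent'' by ``sufficient'' (or better, use the correct criterion from the start) and delete the self-correcting aside about $10\ge 11$.
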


\begin{proof}
Define
\[
        D_k(d)
        =
        \binom{d+k}{k}-1-k(k+d-1).
\]
At $d=3$,
\[
\begin{aligned}
D_k(3)
&=
\binom{k+3}{3}-1-k(k+2)\\
&=
\frac{k(k-1)(k+1)}6
>0.
\end{aligned}
\]
Moreover,
\[
\begin{aligned}
D_k(d+1)-D_k(d)
&=
\binom{d+k}{k-1}-k\\
&>0
\end{aligned}
\]
for $d\ge3$ and $k\ge2$.  Thus $D_k(d)>0$ for every $d\ge3$, so
\[
        \binom{d+k}{k}-1>k(k+d-1).
\]
After division by $k$,
\[
        \frac{\binom{d+k}{k}-1}{k}>k+d-1.
\]
The left-hand side therefore has ceiling at least $k+d$, and hence
\[
        m_0\ge k+(k+d)=2k+d.
\]
\end{proof}

\begin{proof}[Proof of \cref{thm:general-main}]
The case $k=1$ is \cref{lem:k1general}. For $d=1$, the kernel argument gives
\[
        \mu(1,k)=k+1.
\]
We may now assume $d\ge2$ and $k\ge2$.

Suppose first that $d\ge3$. By \cref{lem:components}, we seek a
$k$-dimensional subspace on which
\[
        P_1,\ldots,P_d
\]
vanish simultaneously. Set
\[
m_0
=
k+
\left\lceil
\frac{\binom{d+k}{k}-1}{k}
\right\rceil.
\]
We first remove the linear component. If $P_1\ne0$, let
$H=\ker P_1\subset\CC^{m_0}$; then $\dim H=m_0-1$. If $P_1=0$, choose any
hyperplane $H\subset\CC^{m_0}$, again with $\dim H=m_0-1$. On $H$ it remains
to find a common $k$-dimensional zero subspace for the restricted forms
\[
        P_2|_H,\ldots,P_d|_H,
\]
whose degrees are $2,3,\ldots,d$. Their total number of conditions at a fixed
$k$-plane is
\[
\begin{aligned}
 c'&=\sum_{j=2}^d\binom{j+k-1}{k-1}\\
   &=\binom{d+k}{k}-1-k.
\end{aligned}
\]
Since $\lceil x-1\rceil=\lceil x\rceil-1$,
\[
\begin{aligned}
k+\left\lceil\frac{c'}{k}\right\rceil
&=k+\left\lceil\frac{\binom{d+k}{k}-1}{k}-1\right\rceil\\
&=m_0-1.
\end{aligned}
\]
Thus $\dim H$ is exactly the candidate threshold of
\cref{prop:system-threshold} for the multidegree
$(2,3,\ldots,d)$, which has $s=d-1$. By \cref{lem:bounded-range},
\[
        m_0-1\ge2k+d-1>2k+d-2=2k+s-1,
\]
so the range condition \eqref{eq:system-safe-range} holds. The proposition
therefore gives a $k$-dimensional subspace $E\subset H$ on which
$P_2,\ldots,P_d$ vanish. Since $E\subset H$, also $P_1|_E=0$, and
\cref{lem:components} shows that $P|_E$ is constant.

For sharpness below $m_0$, use the full system of degrees
$(1,2,\ldots,d)$ only in the incidence-dimension argument. If $m<m_0$, then
\[
 k(m-k)<\sum_{j=1}^d\binom{j+k-1}{k-1}
 =\binom{d+k}{k}-1,
\]
so \cref{lem:incidence-dimension} gives a tuple
$(P_1,\ldots,P_d)$ with no common $k$-dimensional zero subspace. The
polynomial $P=P_1+\cdots+P_d$ then has no $k$-dimensional constancy subspace by
\cref{lem:components}. Hence the asserted formula for $\mu(d,k)$ is
sharp.

Finally, let $d=2$. Write
\[
        P=P_0+L+Q,
\]
where $L$ is linear and $Q$ is quadratic. If $P$ is constant on $E$, then
\[
        E\subset\ker L,
        \qquad
        Q|_E=0.
\]
If $m\ge2k+1$, then $\ker L$ has dimension at least $2k$, and
$Q|_{\ker L}$ has a totally isotropic $k$-dimensional subspace by
\cref{lem:quadratic-witt}. Hence
\[
        \mu(2,k)\le2k+1.
\]

Conversely, on $\CC^{2k}$ define
\[
L(z)=z_{2k},
\qquad
Q(z)=z_1^2+\cdots+z_{2k-1}^2.
\]
Then $Q|_{\ker L}$ is nondegenerate on the $(2k-1)$-dimensional space
$\ker L$. By \cref{lem:quadratic-witt}, its maximal totally isotropic
dimension is $k-1$. Thus no $k$-dimensional constancy subspace exists, and
\[
        \mu(2,k)=2k+1.
\]
\end{proof}

\section{The first nontrivial case: a direct proof for \texorpdfstring{$k=2$}{k=2}}
\label{sec:k2-direct}

We give a separate proof for $k=2$ which uses neither Tevelev nor
Debarre--Manivel.  The relevant top Chern class reduces to a two-variable
real-rooted polynomial, and Newton's inequalities detect the critical
Schubert class.  The same argument works in the homogeneous and bounded-degree
cases.

Let
\[
        G=\Gr(2,m)
\]
and let $S\to G$ be the tautological rank-$2$ bundle.  A
$d$-homogeneous polynomial $P$ on $\CC^m$ defines a continuous, in fact
algebraic, section
\[
        s_P:G\longrightarrow \operatorname{Sym}^d(S^*),
        \qquad
        s_P(E)=P|_E.
\]
Thus $s_P(E)=0$ exactly when $E$ is a two-dimensional zero subspace of $P$.
Consequently, if the top Chern class of $\operatorname{Sym}^d(S^*)$ is
nonzero, no section can be nowhere zero; hence every such section vanishes
somewhere.

Here is the bundle-theoretic mechanism in more detail.  The fiber of $S$ over
$E\in G$ is $S_E=E$, so the fiber of $\operatorname{Sym}^d(S^*)$ is precisely
the space of degree-$d$ homogeneous forms on $E$.  Moreover, a nowhere-zero
section of a rank-$r$ complex vector bundle determines a trivial line
subbundle and therefore forces its top Chern class $c_r$ to vanish.  Thus it
remains to show that the top Chern class of the restriction bundle is nonzero.
We use the splitting principle only as a computational device: if $x,y$ are
formal Chern roots of $S^*$, then the Chern classes of its symmetric powers
are computed as symmetric polynomials in $x,y$.  Finally, the Schur classes
indexed by partitions contained in the $2\times(m-2)$ rectangle form the
Schubert basis of $H^*(\Gr(2,m);\mathbb Z)$; this will allow us to detect a
nonzero term in the computed class.

Set
\[
        r=d+1,
        \qquad
        q=\left\lceil\frac r2\right\rceil,
        \qquad
        m=2+q.
\]
Thus $r$ is the rank of $\operatorname{Sym}^d(S^*)$.
We prove
\begin{equation}\label{eq:k2-top-chern}
        c_r(\operatorname{Sym}^d S^*)\ne0
        \quad\text{in }H^{2r}(\Gr(2,2+q);\mathbb Z).
\end{equation}

\begin{lemma}[The two-variable Euler polynomial]\label{lem:k2-euler}
Let $x,y$ denote the formal Chern roots of $S^*$.  Then
\[
F_d(x,y)
:=c_r(\operatorname{Sym}^dS^*)
=
\prod_{a=0}^d\bigl(ax+(d-a)y\bigr).
\]
If
\[
        F_d(x,y)=\sum_{j=0}^{r} C_jx^jy^{r-j},
\]
then
\[
        C_j=C_{r-j},
\]
and
\[
        C_1<C_2<\cdots<C_{\lfloor r/2\rfloor}.
\]
\end{lemma}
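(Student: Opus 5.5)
The plan is to compute $F_d$ with the splitting principle, read off the symmetry from the $x\leftrightarrow y$ swap, and obtain the strict increase from Newton's inequalities combined with that symmetry.

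\emph{Step 1: the product formula.} By the splitting principle we treat $S^*$ as $L_1\oplus L_2$ with $c_1(L_1)=x$ and $c_1(L_2)=y$. Then
\[
\operatorname{Sym}^dS^*=\bigoplus_{a=0}^d L_1^{\otimes a}\otimes L_2^{\otimes(d-a)},
\]
so its Chern roots are $ax+(d-a)y$ for $a=0,\dots,d$. There are exactly $r=d+1$ of them, so the top Chern class is their product. This is the stated formula for $F_d$.

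\emph{Step 2: symmetry.} The substitution $a\mapsto d-a$ permutes the factors and sends $ax+(d-a)y$ to $(d-a)x+ay$. Hence $F_d(x,y)=F_d(y,x)$, and comparing coefficients gives $C_j=C_{r-j}$.

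\emph{Step 3: isolate the real-rooted part.} The factors with $a=0$ and $a=d$ are $dy$ and $dx$. So we write
\[
F_d=d^2xy\,G(x,y),\qquad G(x,y)=\prod_{a=1}^{d-1}\bigl(ax+(d-a)y\bigr)=\sum_{i=0}^{n}g_ix^iy^{n-i},\qquad n=d-1=r-2.
\]
This gives $C_0=C_r=0$ and $C_j=d^2g_{j-1}$ for $1\le j\le r-1$. Every factor of $G$ has positive coefficients, so every $g_i>0$. The dehomogenization $G(t,1)$ has only the real (negative) roots $-(d-a)/a$. By Step 2 applied to $G$, we also have $g_i=g_{n-i}$.

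\emph{Step 4: Newton's inequalities and strict increase.} This is the step needing the most care, mainly to get strictness; the cases $d\le2$ make the chain vacuous or trivial and will be noted separately. Newton's inequalities for the real-rooted polynomial $G(t,1)$ read
\[
g_i^2\ge g_{i-1}g_{i+1}\Bigl(1+\tfrac1i\Bigr)\Bigl(1+\tfrac1{n-i}\Bigr),\qquad 1\le i\le n-1.
\]
Because all $g_i>0$ and the factor on the right exceeds $1$, this gives the strict inequality $g_i^2>g_{i-1}g_{i+1}$. Equivalently, the ratios $\rho_i=g_{i+1}/g_i$ are strictly decreasing in $i$. The symmetry $g_i=g_{n-i}$ gives $\rho_{n-1-i}=1/\rho_i$. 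So if $i<(n-1)/2$, then $i<n-1-i$, and therefore $\rho_i>\rho_{n-1-i}=1/\rho_i$, which forces $\rho_i>1$. Hence
\[
g_0<g_1<\cdots<g_{\lfloor n/2\rfloor}.
\]
Since $\lfloor n/2\rfloor=\lfloor r/2\rfloor-1$ and $C_j=d^2g_{j-1}$, this is exactly $C_1<C_2<\cdots<C_{\lfloor r/2\rfloor}$.
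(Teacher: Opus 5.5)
Your proof is correct and follows essentially the same route as the paper: the splitting principle gives the product formula, the substitution $a\mapsto d-a$ gives the symmetry, you remove the two extreme factors to get a real-rooted polynomial with positive coefficients, and strict Newton inequalities combined with the reciprocal pairing of ratios give the increase up to the middle. Your index bookkeeping ($C_j=d^2g_{j-1}$ and $\lfloor n/2\rfloor=\lfloor r/2\rfloor-1$) is equivalent to the paper's application of Newton's inequalities directly to $C_1,\ldots,C_d$.
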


\begin{proof}
The formula for $F_d$ follows from the splitting principle: the Chern roots
of $\operatorname{Sym}^dS^*$ are
\[
        ax+(d-a)y,
        \qquad a=0,1,\ldots,d.
\]
The symmetry $F_d(x,y)=F_d(y,x)$ gives $C_j=C_{r-j}$.

Now put $y=1$.  Since the factors corresponding to $a=0$ and $a=d$ are
$d$ and $dz$, respectively,
\[
        F_d(z,1)=zQ_d(z),
\]
where $Q_d$ has degree $d-1$ and all its roots are real and strictly
negative:
\[
        -\frac{d-a}{a},
        \qquad a=1,\ldots,d-1.
\]
The coefficients of $Q_d$ are exactly $C_1,\ldots,C_d$ and are all
positive; the two omitted coefficients are $C_0=C_r=0$.  Thus Newton's
inequalities (see, for instance, \cite{Brenti1989}) are applied only to the positive support $C_1,\ldots,C_d$.
For $d=2$ the assertion of the lemma is vacuous beyond $C_1>0$.  For
$d\ge3$, the normalized Newton inequalities give, for $2\le j\le d-1$,
\[
 \left(\frac{C_j}{\binom{d-1}{j-1}}\right)^2
 \ge
 \frac{C_{j-1}}{\binom{d-1}{j-2}}
 \frac{C_{j+1}}{\binom{d-1}{j}}.
\]
Since
\[
 \binom{d-1}{j-1}^2
 >
 \binom{d-1}{j-2}\binom{d-1}{j},
\]
and all coefficients are positive, it follows that
\[
        C_j^2>C_{j-1}C_{j+1}.
\]
Hence the successive ratios $C_{j+1}/C_j$ decrease strictly on the positive
support.  The symmetry $C_j=C_{r-j}$ pairs ratios situated symmetrically
about the middle as reciprocals.  Since the ratios decrease strictly, those
before the middle are greater than $1$, and therefore
\[
        C_1<C_2<\cdots<C_{\lfloor r/2\rfloor},
\]
as claimed.
\end{proof}

For a partition $(r-b,b)$ with $0\le b\le\lfloor r/2\rfloor$, the
Schur polynomial in two variables is
\[
        s_{(r-b,b)}(x,y)
        =\sum_{j=b}^{r-b}x^jy^{r-j}.
\]
Write
\[
        F_d(x,y)
        =\sum_{b=0}^{\lfloor r/2\rfloor}
        A_b s_{(r-b,b)}(x,y).
\]
Comparing coefficients for $j\le r/2$ gives
\[
        C_j=A_0+\cdots+A_j,
        \qquad
        A_j=C_j-C_{j-1}.
\]
Let $b_*=\lfloor r/2\rfloor$. By \cref{lem:k2-euler},
\[
        A_{b_*}=C_{b_*}-C_{b_*-1}>0.
\]
The corresponding partition
\[
        \lambda_*=(r-b_*,b_*)
\]
is $(q,q)$ when $r$ is even and $(q,q-1)$ when $r$ is odd.  More is true:
$\lambda_*$ is the unique partition of size $r$ contained in the
$2\times q$ rectangle.  Indeed, if $\lambda=(\lambda_1,\lambda_2)$ has
$|\lambda|=r$ and $\lambda_1\le q$, then
$\lambda_1\ge\lceil r/2\rceil=q$, so necessarily
\[
        \lambda_1=q,
        \qquad
        \lambda_2=r-q.
\]
Thus, in degree $2r$, every Schur class occurring in $F_d$ vanishes in
$H^*(\Gr(2,2+q);\mathbb Z)$ except possibly the class indexed by
$\lambda_*$.  The Schubert-basis description of the Grassmannian cohomology
(see, e.g., \cite[Chapter~9]{FultonYoungTableaux}) gives
\[
        s_{\lambda_*}(S^*)\ne0.
\]
Since its coefficient is $A_{b_*}>0$, we obtain more explicitly
\[
 c_r(\operatorname{Sym}^dS^*)
 =A_{b_*}s_{\lambda_*}(S^*)\ne0,
\]
which proves \eqref{eq:k2-top-chern}.

We have therefore proved directly that every $d$-homogeneous polynomial on
\[
        \CC^{\,2+\lceil(d+1)/2\rceil}
\]
has a two-dimensional zero subspace.  By \cref{lem:incidence-dimension},
the universal property fails in every dimension below the threshold. Thus,
for $d\ge2$,
\[
        h(d,2)=2+\left\lceil\frac{d+1}{2}\right\rceil,
\]
recovering the $k=2$ case of \cref{thm:hom-main} without the general
isotropic-subspace theorem.

The same one-variable mechanism also treats the bounded-degree problem,
but we spell out the reduction because the extra homogeneous components
produce a useful factor that should not be hidden. Set
\[
 R_d:=\frac{d(d+3)}2,
 \qquad
 q_d:=\left\lceil\frac{R_d}{2}\right\rceil,
 \qquad
 m_d:=2+q_d.
\]
Put
\[
 \mathcal E_{\le d}
 =
 \bigoplus_{j=1}^{d}\operatorname{Sym}^j(S^*)
 \longrightarrow \Gr(2,m_d).
\]
A polynomial $P=P_0+P_1+\cdots+P_d$ gives the section
\[
 E\longmapsto
 (P_1|_E,\ldots,P_d|_E),
\]
whose zeros are precisely the two-dimensional subspaces on which $P$ is
constant.  Its rank is
\[
 \operatorname{rank}\mathcal E_{\le d}
 =\sum_{j=1}^d(j+1)=R_d.
\]
By the splitting principle, the top Chern polynomial is
\[
 G_d(x,y)
 =
 \prod_{j=1}^d\prod_{a=0}^j
 \bigl(ax+(j-a)y\bigr).
\]
For each fixed $j$, the two extreme factors, corresponding to $a=0$ and
$a=j$, contribute $j^2xy$.  Hence
\begin{equation}\label{eq:Gd-factorization}
\begin{aligned}
 G_d(x,y)
 &=(d!)^2(xy)^d H_d(x,y),\\
 H_d(x,y)
 &=\prod_{j=2}^d\prod_{a=1}^{j-1}
 \bigl(ax+(j-a)y\bigr).
\end{aligned}
\end{equation}
The polynomial $H_d$ is symmetric and homogeneous of degree
\[
 M_d:=\sum_{j=2}^d(j-1)=\frac{d(d-1)}2.
\]
After setting $y=1$ we obtain
\[
 H_d(z,1)
 =\prod_{j=2}^d\prod_{a=1}^{j-1}(az+j-a),
\]
whose roots are the strictly negative real numbers
\[
        -\frac{j-a}{a},
        \qquad 2\le j\le d,
        \quad 1\le a\le j-1.
\]
Write
\[
        H_d(x,y)=\sum_{\ell=0}^{M_d}D_\ell x^\ell y^{M_d-\ell}.
\]
All $D_\ell$ are positive and, by symmetry,
$D_\ell=D_{M_d-\ell}$. Newton's inequalities, in their normalized form,
give
\[
 \left(\frac{D_\ell}{\binom{M_d}{\ell}}\right)^2
 \ge
 \frac{D_{\ell-1}}{\binom{M_d}{\ell-1}}
 \frac{D_{\ell+1}}{\binom{M_d}{\ell+1}}
 \qquad(1\le\ell\le M_d-1).
\]
Since
\[
 \binom{M_d}{\ell}^2
 >
 \binom{M_d}{\ell-1}\binom{M_d}{\ell+1},
\]
and all coefficients are positive, this implies the strict log-concavity
\[
        D_\ell^2>D_{\ell-1}D_{\ell+1}.
\]
The successive ratios $D_{\ell+1}/D_\ell$ therefore decrease strictly. By
symmetry, ratios situated symmetrically about the middle are reciprocals, so
the ratios before the middle are greater than $1$. Consequently, for $d\ge3$,
the coefficients increase strictly up to the middle:
\[
 D_0<D_1<\cdots<D_{\lfloor M_d/2\rfloor}.
\]
Write
\[
        G_d(x,y)=\sum_{u=0}^{R_d}C_u^{(d)}x^uy^{R_d-u}.
\]
For later reference, the two small degrees can be read off explicitly.
When $d=1$,
\[
        G_1(x,y)=xy,
\]
so $R_1=2$ and the critical coefficient difference is
$C_1^{(1)}-C_0^{(1)}=1$. When
$d=2$,
\[
        G_2(x,y)=4(xy)^2(x+y),
\]
so $R_2=5$ and $C_2^{(2)}-C_1^{(2)}=4$. Thus the critical difference is
positive also in these two cases.

The factorization \eqref{eq:Gd-factorization} shows that
\[
 C_u^{(d)}=
 \begin{cases}
 (d!)^2D_{u-d},& d\le u\le R_d-d,\\
 0,&\text{otherwise}.
 \end{cases}
\]
Set
\[
        b_*:=\left\lfloor\frac{R_d}{2}\right\rfloor.
\]
Since $R_d=M_d+2d$, we have
\[
        b_*-d=\left\lfloor\frac{M_d}{2}\right\rfloor.
\]
Thus, for $d\ge3$, the strict increase of the $D_\ell$ gives
\[
 C_{b_*}^{(d)}-C_{b_*-1}^{(d)}
 =(d!)^2
 \left(
 D_{\lfloor M_d/2\rfloor}
 -D_{\lfloor M_d/2\rfloor-1}
 \right)>0.
\]
For $d=1,2$ the same strict positivity was computed explicitly above.  To
make the Schur-basis step explicit, write
\[
G_d(x,y)
=
\sum_{b=0}^{\lfloor R_d/2\rfloor}
B_b^{(d)}s_{(R_d-b,b)}(x,y).
\]
Comparing coefficients up to the middle gives
\[
        C_u^{(d)}=B_0^{(d)}+\cdots+B_u^{(d)},
        \qquad
        B_u^{(d)}=C_u^{(d)}-C_{u-1}^{(d)}.
\]
Hence the coefficient of
\[
        \Lambda_*=(R_d-b_*,b_*)
\]
is $B_{b_*}^{(d)}=C_{b_*}^{(d)}-C_{b_*-1}^{(d)}$ and is therefore positive.
Moreover, $\Lambda_*$ is the unique partition of size $R_d$ contained in the
$2\times q_d$ rectangle. Hence the top Chern class of $\mathcal E_{\le d}$
is nonzero in
\[
 H^{2R_d}(\Gr(2,m_d);\mathbb Z).
\]
Every section arising from a polynomial of degree at most $d$ must therefore
vanish. Combining this existence statement with
\cref{lem:incidence-dimension} gives
\[
 \mu(d,2)
 =
 2+\left\lceil\frac{R_d}{2}\right\rceil
 =
 2+\left\lceil\frac{d(d+3)}4\right\rceil,
\]
which is the $k=2$ specialization of \cref{thm:general-main}.  Thus the direct
Chern--Newton argument sees both exact thresholds in dimension two without
using either of the two global existence theorems.

\paragraph{Two complementary mechanisms.}
The proofs above separate two features of the problem.  The incidence count
is responsible for the exact numerical threshold and for sharpness below it.
Universal existence is then supplied either by a global projective theorem
(Tevelev in the homogeneous case and Debarre--Manivel, after eliminating the
linear component, for the system of degrees $2,\ldots,d$) or, when $k=2$, by
a direct topological obstruction.
The latter proof shows that the threshold is not merely an artifact of a
genericity theorem: in the first nontrivial rank it is detected by the top
Chern class of the restriction bundle itself.

\section{Consequences and asymptotics}

The homogeneous formula is
\[
h(d,k)=
k+\left\lceil
\frac1k\binom{d+k-1}{k-1}
\right\rceil
\qquad(d\ge3),
\]
while the bounded-degree formula is
\[
\mu(d,k)=
k+\left\lceil
\frac{\binom{d+k}{k}-1}{k}
\right\rceil
\qquad(d\ge3).
\]

The difference is structural. A homogeneous polynomial gives one projective
equation. An arbitrary polynomial of degree at most $d$ gives, after
separating its homogeneous components, a simultaneous system of degrees
\[
        (1,2,\ldots,d).
\]
The two thresholds therefore count different incidence problems.

For fixed $d\ge3$ and $k\to\infty$, one has
\[
\binom{d+k-1}{k-1}
=
\binom{d+k-1}{d}
=
\frac{k^d}{d!}+O(k^{d-1})
\]
and
\[
\binom{d+k}{k}
=
\binom{d+k}{d}
=
\frac{k^d}{d!}+O(k^{d-1}).
\]
Since $d\ge3$, the additive term $k$ is of lower order than
$k^{d-1}$.  Therefore
\[
h(d,k)\sim\frac{k^{d-1}}{d!},
\qquad
\mu(d,k)\sim\frac{k^{d-1}}{d!}.
\]
Although the leading terms agree, Pascal's identity yields
\[
\binom{d+k}{k}-1-\binom{d+k-1}{k-1}
=
\binom{d+k-1}{d-1}-1.
\]
Since each ceiling changes its argument by less than one,
\[
\mu(d,k)-h(d,k)
=
\frac1k\binom{d+k-1}{d-1}+O(1)
\sim
\frac{k^{d-2}}{(d-1)!}.
\]
Thus the bounded-degree condition produces a lower-order, but still
unbounded, increase when the degree is fixed.

The contrast is sharper when $k\ge2$ is fixed and $d\to\infty$.  Indeed,
\[
\binom{d+k-1}{k-1}
=
\frac{d^{k-1}}{(k-1)!}+O(d^{k-2}),
\qquad
\binom{d+k}{k}
=
\frac{d^k}{k!}+O(d^{k-1}),
\]
and hence
\[
h(d,k)\sim\frac{d^{k-1}}{k!},
\qquad
\mu(d,k)\sim\frac{d^k}{k\,k!}.
\]
The bounded-degree threshold then has one higher polynomial order in $d$.

The quadratic identities
\[
        h(2,k)=2k,
        \qquad
        \mu(2,k)=2k+1
\]
are the simplest manifestation of this distinction. The linear term in an
arbitrary quadratic polynomial first forces the constancy subspace into a
hyperplane; the quadratic condition must then be satisfied inside that
hyperplane.

The earlier inductive estimates can be much larger because they construct
zero directions successively, while the Grassmannian treats all
$k$-dimensional subspaces at once. For
example, in the cubic case the exact threshold grows quadratically in $k$,
while the earlier bound displayed in the introduction grows exponentially.

\section*{Author contributions}
All authors contributed to the conceptual development, verification of the arguments,
and writing and revision of the manuscript. All authors read and approved the final
manuscript.

\section*{Funding}
This work was supported by the Conselho Nacional de Desenvolvimento
Cient\'ifico e Tecnol\'ogico (CNPq, Brazil), through Grants No.~406457/2023-9
(CNPq/MCTI Call No.~10/2023) and No.~403964/2024-5 (MCTI/CNPq Call
No.~16/2024). D.~M. Pellegrino was also supported by Grant No.~305807/2025-0,
and A.~Raposo Jr. by Grant No.~302341/2025-0. The funding agency had no role
in the development of the results, the preparation of the manuscript, or the
decision to submit the article.

\section*{Data availability}
No data were used for the research described in this article.

\section*{Declaration of competing interest}
The authors declare that they have no known competing financial interests or
personal relationships that could have appeared to influence the work
reported in this article.

\section*{Declaration of generative AI and AI-assisted technologies in the manuscript preparation process}
During the preparation of this work, the authors used OpenAI's ChatGPT
to assist with mathematical discussion, the exploration of possible
approaches, bibliographic searches, and the improvement of exposition.
After using this tool, the authors reviewed and edited the content as
needed, independently verified all mathematical statements, proofs,
calculations, and references, and take full responsibility for the
content of the publication.

\end{document}